\numberwithin{equation}{section}
\theoremstyle{plain}
\newtheorem{theorem}{Theorem}[section]
\newtheorem*{theorem*}{theorem}
\newtheorem{lemma}[theorem]{Lemma}
\newtheorem{corollary}[theorem]{Corollary}
\newtheorem{proposition}[theorem]{Proposition}
\theoremstyle{definition}
\newtheorem{remark}[theorem]{Remark}
\newcommand{\dv}{\mathrm{div}}
\newcommand{\vu}{\mathbf{u}}
\newcommand{\vn}{\mathbf{n}}
\newcommand{\sL}{\mathscr{L}}
\newcommand{\subeqref}[2]{$ \eqref{#1}_{#2} $}
\newcommand{\abs}[2]{\left\lvert #1 \right\rvert^{#2}}
\begin{document}

	\title[]{Immediate blowup of entropy-bounded classical solutions to the vacuum free boundary problem of non-isentropic compressible Navier--Stokes equations}
	
	\author[X. Liu]{Xin LIU}
	\address[X. Liu]{
		Department of Mathematics, Texas A\&M University \\
		College Station TX 77843-3368, Texas, USA. 
	}
	\email{stleonliu@gmail.com}
	
	\author[Y. Yuan]{Yuan YUAN$ ^* $}
	\address[Y. Yuan]{School of Mathematical Sciences, South China Normal University \\
		Guangzhou 510631, China.}
	\email{yyuan2102@m.scnu.edu.cn}
	\thanks{$ ^* $Corresponding author.}
	
	\date{\today}
	
	\begin{abstract}
		This paper considers the immediate blowup of entropy-bounded classical solutions to the vacuum free boundary problem of non-isentropic compressible Navier--Stokes equations. The viscosities and the heat conductivity could be constants, or more physically, the \emph{degenerate, temperature-dependent} functions which vanish on the vacuum boundary (i.e., $\mu=\bar{\mu} \theta^{\alpha}, ~ \lambda=\bar{\lambda} \theta^{\alpha},~ \kappa=\bar{\kappa} \theta^{\alpha}$, for constants $0\leq \alpha\leq 1/(\gamma-1)$, $\bar{\mu}>0,~2\bar{\mu}+n\bar{\lambda}\geq0,~\bar{\kappa}\geq 0$, and adiabatic exponent $\gamma>1$). 
		
		With prescribed decaying rate of the initial density across the vacuum boundary, we prove that: (1) for three-dimensional spherically symmetric flows with \emph{non-vanishing bulk viscosity and zero heat conductivity}, entropy-bounded classical solutions \textbf{do not exist} for any small time, provided the initial velocity is expanding near the boundary; 
		(2) for three-dimensional spherically symmetric flows with \emph{non-vanishing heat conductivity}, the normal derivative of the temperature of the classical solution across the free boundary \textbf{does not degenerate}, and therefore the entropy \textbf{immediately blowups} if the decaying rate of the initial density is not of $1/(\gamma-1)$ power of the distance function to the boundary; 
		(3) for one-dimensional flow with \emph{zero heat conductivity}, the non-existence result is similar but need more restrictions on the decaying rate. 
		
		Together with our previous results on local or global entropy-bounded classical solutions (\textit{Liu and Yuan, SIAM J. Math. Anal. (2) 51, 2019}; \textit{Liu and Yuan, Math. Models Methods Appl. Sci. (9) 12, 2019}), this paper shows the necessity of proper degenerate conditions on the density and temperature across the boundary	for the well-posedness of the entropy-bounded classical solutions to the vacuum boundary problem of the viscous gas.
		
	\end{abstract}
	
\keywords{compressible Navier--Stokes equations, vacuum free boundary, blowup, maximum principle}

\subjclass[2010]{35Q30, 74A15, 76N10}

	\maketitle


\section{Introduction and main results}

\subsection{Equations}
In this paper, we study the motion of non-isentropic viscous gas connecting vacuum via free boundary. The gas occupied domain is denoted by $\Omega(t) \subset \mathbb{R}^n $, $ n = 1,2,3$, which is assumed to be a simply connected, open domain evolving with the gas flow. 

In $\Omega(t)$, the non-isentropic viscous gas flow is modeled by the non-isentropic compressible Navier--Stokes equations (CNS):
\begin{equation}\label{NS}
	\left\{
	\begin{aligned}
		& \rho_t+\dv\, (\rho \vu)=0 \, , \\
		& (\rho \vu)_t +\dv \, (\rho \vu \otimes \vu)+\nabla p=  \dv \, (\mu (\nabla \vu+(\nabla \vu)^\intercal)+\lambda  \dv  \vu \ \mathbb{I}_n) \, ,\\
		&c_v\partial_t (\rho \theta)+c_v\dv\, (\rho \vu \theta) + p \dv \vu= \frac{\mu}{2}  \abs{\nabla \vu + (\nabla \vu) ^\intercal}{2}  + \lambda (\dv \vu)^2 + \kappa \Delta \theta \, ,   
	\end{aligned}
	\right.
\end{equation}
where the space-time variables are $(y,t)\in \mathbb{R}^n\times[0,\infty)$, and $\rho$, $\vu$, $p$, and $\theta$ represent the scalar density, the velocity field, the pressure potential, and 
the absolute temperature, respectively. 
$\mu$, $2\mu+n\lambda$ and $\kappa$ are the shear viscosity, the bulk viscosity and the heat conductivity, respectively and in this paper they are assumed to be constants, or  \emph{temperature-dependent functions which vanish on the vacuum boundary}:
\begin{equation}\label{coefficients}
	\mu=\bar{\mu} \theta^{\alpha}, ~ \lambda=\bar{\lambda} \theta^{\alpha},~ \kappa=\bar{\kappa} \theta^{\alpha},
	~ \text{where~constants~} \bar{\mu}>0,~2\bar{\mu}+n\bar{\lambda}\geq0,~\bar{\kappa}\geq 0, ~\alpha\geq0.
\end{equation}
\emph{Here $\mu,\lambda,\kappa$ are constants when $\alpha=0$.} Such a setting of the viscosities agrees with the kinetic theory of gas dynamic, in which the CNS is derived from the Boltzmann equations through the Chapman--Enskog expansion.
More precisely, if the intermolecular potential varies as $r^{-a}$ with $r$ being the molecule distance and $a$ being a positive constant, then $\mu, \lambda$ satisfy
\begin{equation}\label{coefficients-1}
	\mu=\bar{\mu} \theta^{\frac{a+4}{2a}}, ~ \lambda=\bar{\lambda} \theta^{\frac{a+4}{2a}}, ~ \kappa=\bar{\kappa} \theta^{\frac{a+4}{2a}}.
\end{equation}
Obviously, our setting \eqref{coefficients} is the general form of \eqref{coefficients-1}.
We refer the interested readers to Chapman--Cowling \cite{Chapm.C1970} and Li--Qin \cite{Li.Q2014} for the Chapman--Enskog expansion.

We assume that the gas is polytropic. The pressure potential $p$, the specific inner energy $ e $, and the entropy $ s $ are related by the following equations of state, satisfying the Gibbs--Helmholtz equation:
\begin{equation}
	\label{state}
	p=R\rho\theta = \bar{A}e^{s(\gamma-1)/R}\rho^{\gamma},\quad
	e=c_v\theta,
\end{equation}
where $\gamma>1$, $R>0$, and $ c_v=\frac{R}{\gamma-1}>0$ are referred to as the adiabatic exponent, the universal gas constant, and the specific heat coefficient, respectively. Here $ \bar A $ is a positive constant.

On the moving gas--vacuum interface $ \Gamma(t):=\partial \Omega(t) $, the normal stress balance condition is given by
\begin{equation}\label{boundary}
	(\mu (\nabla \vu+(\nabla \vu)^\intercal)+\lambda  \dv \vu \ \mathbb{I}_n -p\mathbb{I}_n) \vn=0, ~ \quad\text{on} \ \Gamma(t),
\end{equation}
and $\Gamma(t) $ evolves in time with the gas flow on the boundary, i.e.,
\begin{equation}\label{kinematic}
	\mathcal{V}(\Gamma (t))= (\vu \cdot \vn) \vert_{\Gamma(t)},
\end{equation}
where $\mathcal{V}(\Gamma(t))$ and $ \vn $ are the normal velocity of the evolving interface and the exterior unit normal vector on $ \Gamma(t) $, respectively. 
System \eqref{NS} with \eqref{boundary} is complemented with the following initial data,
\begin{equation}
	\label{initial}
	(\rho, \vu, \theta)(y,t=0)= (\rho_0,\vu_0,\theta_0)(y), \quad y \in \Omega_0:=\Omega(0),
\end{equation}
where $\rho_0,\theta_0>0$ in $\Omega_0$ and $\rho_0=\theta_0=0$ on $\Omega_0$.

In this paper, we investigate solutions to system \eqref{NS} with the following properties:
\begin{enumerate}[label = p-\theenumi.,ref = p-\theenumi]
	\item Initially density connects to vacuum continuously; \label{property:vacuum}
	\item Entropy remains bounded as long as solutions exist. \label{property:entropy}
\end{enumerate}
That is, we consider solutions to system \eqref{NS} with bounded entropy and vacuum free boundary. In particular, property \ref{property:vacuum} implies that, thanks to the continuity equation \subeqref{NS}{1}, density $ \rho $ vanishes on the moving boundary $\Gamma(t)$. Additionally, property \ref{property:entropy} implies that, thanks to \eqref{state}, temperature $ \theta $ vanishes on $ \Gamma(t) $ as well. 
Therefore, we have 
\begin{equation}
	\label{boundary-2}
	 \rho=0,~ \theta=0 ~ \quad\text{on} \ \Gamma(t),
\end{equation} 
provided our solutions with properties \ref{property:vacuum} and \ref{property:entropy} exist. 

Moreover, we consider the initial density profile with
\begin{equation}\label{H1}
	 - \infty< \nabla_\vn (\rho_0^{1/\delta}) < 0 \quad \text{on~} \Gamma_0 := \partial\Omega_0  
\end{equation}
for a positive constant $\delta$,  where $\nabla_\vn$ is the outward normal derivative. 
When $\delta=1/(1-\gamma)$, such a \emph{singular boundary condition} for the initial density profile is exactly the \emph{physical vacuum condition} (see \eqref{physical-vacuum}) in the isentropic case.
Thanks to property \ref{property:vacuum}, by denoting the distance function to the boundary $ \Gamma_0 $ as $d(y)$,  \eqref{H1} implies that, for $ y \in \Omega_0 $ in a neighborhood of $ \Gamma_0 $, 
\begin{equation}\label{H1-1} 
\left\{
\begin{gathered}
	C_1^{-1} d^{\delta}(y) <  \rho_0(y) < C_1 d^{\delta}(y),\\
	C_2^{-1} d^{\delta-1}(y) <  \abs{\nabla_\vn \rho_0(y)}{} < C_2 d^{\delta-1}(y) 
\end{gathered}
\right.
\end{equation}
for some positive constants $ C_1,C_2 \in (0,\infty)$.

Without loss of generality, we consider solutions to \eqref{NS} with properties \ref{property:vacuum} and \ref{property:entropy} in the following two cases:
\begin{enumerate}[label = Case \theenumi., ref = Case \theenumi]
	\item $ n = 3 $ with spherical symmetry, and $\Omega_0 = B((0,0,0),1)$, i.e., unit ball centered at the origin;
	\item $ n = 1 $ and $ \Omega_0 = (0,1) $.
\end{enumerate}

\subsection{Literature review}
There have been a huge number of literatures concerning the CNS. For the Cauchy and first initial boundary value problems, when there are positive lower bounds of the initial density profiles (i.e., $\rho\geq \underline\rho > 0 $), the local well-posedness of classical solutions has been investigated by Serrin \cite{Serri.1959}, Nash\cite{Nash.1962}, Itaya \cite{Itaya.1971}, Tani \cite{Tani.1977}. The pioneering works of Matsumura and Nishida \cite{Matsu.N1980,Matsu.N1983} showed the global stability of equilibria for the heat conductive flows with respect to small perturbations. 
Later, Hoff and Smoller \cite{Hoff.S2001} proved that vacuum states do not occur in the gas described by the one-dimensional CNS, provided no vacuum states are present initially. Such a conclusion is also true for three-dimensional and spherically symmetric case away from the origin (\cite{Guo.L.X2012a}), however, it is not confirmed without the symmetry in multi-dimensional case.

When the density profile contains vacuum state (i.e., $\rho\geq0$), Cho and Kim \cite{Cho.K2006,Cho.K2006a} showed the local well-posedness of the CNS for isentropic and heat conductive flows with some compatible conditions. With small initial energy, Huang, Li, Xin \cite{Huang.L.X2012,Huang.L2018} established the global well-posedness for the isentropic and heat-conductive flows. However, These solutions have infinite entropy in the vacuum area. In fact, as pointed out by Xin and Yan \cite{Xin.1998a,Xin.Y2013}, the classical solutions with bounded entropy to the CNS without heat conduction will blow up in finite time due to the appearance of vacuum state. Such a result also was extended to the heat-conductive CNS in \cite{Cho.J2006,Jiu.W.X2015}. Recently, Li, Wang and Xin \cite{Li.W.X2019} showed that there does not exist any local-in-time classical solution in the inhomogeneous Sobolev space to the CNS if vacuum exists in general.

In order to establish a solution to the CNS with bounded entropy and vacuum states, the studies above motivate us to consider the free boundary problem, and investigate \emph{singular boundary conditions} of the profiles across the vacuum boundary. 
When the density connects to the vacuum on the moving boundary with a jump, the local well-posedness theory and the global stability of equilibria can be tracked back to Solonnikov, Tani, Zadrzy\'nska, and Zaj\c{a}czkowski, \cite{Solon.T1990,Zadrz.2001, Zaja.1994}. 
On the other hand, when the density profile connects continuously to vacuum across the moving boundary, 
Jang, Masmoudi, Coutand, Lindblad, and Shkoller established the local well-posedness in \cite{Jang.M2009,Jang.M2015,Couta.L.S2010,Couta.S2011,Couta.S2012} for isentropic inviscid flows with physical vacuum condition, i.e., the sound speed of the fluid $c=\sqrt{{ P'(\rho)} }$ is $1/2-$Holder continuous across the vacuum boundary:
\begin{equation}\label{physical-vacuum}
	- \infty< \nabla_\vn (c^2) < 0 \quad \text{on~} \Gamma(t) . 
\end{equation}
The physical vacuum condition was first proposed by \cite{Liu.1996} when he studied the self-similar solutions to compressible Euler with damping; see \cite{Luo.Z2016,Zeng.2017,Zeng.2021} for the global smooth solutions to this model around the self-similar solutions. Condition \cref{physical-vacuum} also commonly appears in other physical models: the stationary solutions to the compressible Euler--Poisson or CNS--Poisson equations (i.e., Lane--Emden solutions) which models gaseous stars \cite{Chand.1957}, and self-similar expanding solutions to compressible Euler equations \cite{Sider.2017}; see \cite{Gu.L2012,Gu.L2016,Luo.X.Z2014,Luo.X.Z2016,Luo.X.Z2016a,Hadzi.J2018,Hadzi.J2018a,Parme.H.J2021} for the global smooth solutions to these equations. 
We also refer to the readers to \cite{Jang.M2012} for the investigations of the existence of classical solutions to the 1-D compressible Euler equations under the assumption of different behaviours of the initial density at the boundary. 
In the case of viscous flows, the extra viscosities bring more regularities to the velocity field. Consequently, the degeneracy is comparably causing fewer troubles; see \cite{Luo.X.Y2000,Liu.Y2000,Jang.2010,Guo.X2012,Li.Z2016,Zeng.2015,Liu.2018,Mei.W.X2018,Mai.L.M2019,Gui.W.W2019} for the isentropic flows. 

\vline

\emph{However, few studies of free boundary problems of non-isentropic flows are available.} 
The first author studied of the free boundary problem for non-isentropic flows by investigating the equilibria of the radiation gaseous stars in \cite{Liu.2018}, in which the degeneracy of density and temperature near the vacuum boundary are established. 
Later in \cite{Liu.Y2019}, we first proved the existence and uniqueness of the local-in-time strong solutions to the free boundary problem of the full CNS (constants $\mu, 2\mu+3\lambda, \kappa>0$). In \cite{Liu.Y2019}, we impose more general decay rates of the initial density and temperature near the vacuum boundary, i.e., the first line of \eqref{H1-1} and 
\begin{equation}\label{physical-vacuum-theta0}
 - \infty< \nabla_\vn (\theta_0) < 0 \quad \text{on~} \Gamma_0.
\end{equation}
Actually, the above condition \cref{physical-vacuum-theta0} can be automatically fulfilled for the solution after the initial time thanks to the Hopf's lemma of the degenerate parabolic type equation; see Theorem \ref{thm-3'} below. We also established a class of globally degenerate large solutions to the free boundary problem of the CNS with only constant shear viscosity and without bulk viscosity and heat conductivity ($\mu>0$, $2\mu+3\lambda, \kappa=0$) in \cite{Liu.Y2019a}. We would like to point out that the entropy of the solutions we found in \cite{Liu.Y2019,Liu.Y2019a} can be bounded, which is totally different from the blowup results of the Cauchy problem in \cite{Xin.1998a, Li.W.X2019}. Recently Chen et.al. \cite{Chen.H.W.W2021} offered different \textit{a priori} estimates in conormal Sobolev spaces for the local-in-time solutions, with fewer compatibility conditions but under the the physical vacuum condition in the isentropic case (i.e., \eqref{H1-1} with $\delta=1/(\gamma-1)$). Also see some works on other models of non-isentropic flows in \cite{Hong.L.Z2018,Geng.L.W.X2019, Mei.2020,Ricka.H.J2021,Ricka.2021,Ricka.2021a}. 

\vline

This work is the first step towards studying the dynamics of flows with \textbf{bounded entropy and degenerate, temperature-dependent transport coefficients} in the setting of vacuum free boundary problems. 
Before establishing the energy estimates and the well-posedness theorems, it is important to first study the boundary behaviors of classical solutions if they exist, and then to investigate the proper \emph{singular boundary conditions} of the profiles across the vacuum boundary. 
Inspired by the work of \cite{Li.W.X2019}, we propose some non-existence theorems to the vacuum free boundary problem of the non-isentropic CNS: 
it is proved in this paper that in the three-dimensional case and $0\leq \alpha \leq 1/(\gamma-1)$, when the heat conductivity does not vanish, the normal derivative of the temperature of the classical solution across the free boundary does not vanish, and the entropy will immediately blowup if the physical vacuum condition is not satisfied ($\delta=1/(\gamma-1)$). 
When the heat conductivity vanishes but the bulk viscosity does not, the classical solution does not exist for any short time provided that \eqref{H1-1} is satisfied and the initial velocity is expanding near the boundary (see \eqref{H2*} below). We also have a similar non-existence result for the one-dimensional case, but with more restrictions on the parameter $\delta$, $\alpha$ and the decaying condition of the initial velocity. Such non-existence theorems show that in the previous well-posedness results (\cite{Liu.Y2019,Liu.Y2019a}), singular boundary conditions across the vacuum boundary imposed on the initial data are reasonable. 

In contrast to the Cauchy problem in \cite{Li.W.X2019}, where the velocity and its derivatives of classical solutions have to vanish on the vacuum boundary, the behaviors for the velocity on the vacuum boundary are not clear for the free boundary problem. Moreover, in the case of the degenerate, temperature-dependent viscosities and heat-conductivity, the ellipticity of the momentum and temperature equations in Lagrangian coordinates may degenerate on the vacuum boundary, and in this case the maximum principles in \cite{Li.W.X2019} cannot be applied. This calls for new analysis on the boundary behaviors of the velocity and entropy, and more general maximum principles for degenerate parabolic equations.
To the best of our knowledge, this paper is the first study concerning the ill-posedness of the vacuum free boundary problem of non-isentropic CNS. 
Our result demonstrates that singular boundary conditions across the vacuum boundary in the previous well-posedness results (\cite{Liu.Y2019,Liu.Y2019a}) are reasonable.
Moreover, our result provides a first-step investigation of the well-posedness theory of the vacuum free boundary problem for non-isentropic viscous flows with degenerate, temperature-dependent transport coefficients.

\subsection{Main results}
In the rest of the paper, we use $W^{k,p}, H^m$ to denote the classical Sobolev space in $\Omega(t)$,  $C^{m_1}_{m_2}(\Omega(t)\times[0,T])$ to denote the set of functions that are $C^{m_1}$ continuous in space and $C^{m_2}$ in time in the domain $\Omega(t)\times[0,T]$.
The main results of this paper are as follows:
\begin{theorem}\label{thm-3}
	In the three-dimensional spherically symmetric case, that with positive bulk viscosity and zero heat conductivity (i.e., $2\bar{\mu}+3\bar{\lambda}>0$, $\kappa=0$), 
	assume that $0\leq \alpha\leq\frac{1}{\gamma-1},$ and $\rho_0$ satisfies \eqref{H1-1},  
	and that there exists $r_0\in (1-d_0,1 )$ such that
	\begin{equation}\label{H2*}
		\frac{y}{\abs{y}{}} \cdot \vu_0(y)>0 ~\text{for}~\abs{y}{}=r_0;\quad \frac{y}{\abs{y}{}} \cdot \vu_0(y)\geq 0 ~\text{for}~r_0<\abs{y}{}<1,
	\end{equation}
	where $ d_0 \in(0,1) $ is a small constant depending only on $ C_1 $ and $C_2 $ in \eqref{H1-1}.
	Then the vacuum free boundary problem \eqref{NS}, \eqref{boundary}, \eqref{kinematic}, \eqref{initial}  and \eqref{boundary-2} has no solution $(\rho,u,\theta)$ satisfying 
	$$
	\begin{aligned}
		&\rho, \theta\in C^{1,1}(\Omega(t) \times [0,T]) \cap C(\overline{\Omega(t)}\times [0,T]), \\
		&u\in C^2_1(\Omega(t) \times [0,T]) \cap C( \overline{\Omega(t)} \times[0,T])\\
		&\qquad \cap L^{\infty}(0,T; W^{2,\infty} (\Omega(t)) ) ,	\\
		&u_t \in L^{\infty} (0,T; L^\infty(\Omega(t))),
	\end{aligned}
	$$
	 with entropy $s$ satisfying 
	\begin{align}
		&s\in L^{\infty}(0,T; W^{1,\infty}(\Omega(t))), \quad s_t\in L^2(0,T;L^{\infty}(\Omega(t))),  \label{S-assump-3}
	\end{align} 
	for any positive time $T$.
\end{theorem}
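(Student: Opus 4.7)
The plan is to argue by contradiction: assume an entropy-bounded classical solution with the stated regularity exists on $[0,T]$, derive first that $u$ and $u_r$ must vanish on the free boundary $\Gamma(t) = \{r = R(t)\}$, and then use the momentum equation to force $u_0$ to be strictly negative just inside $r=1$, contradicting the expansion hypothesis \eqref{H2*}.

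The first main step is to identify the correct boundary behavior. From the equation of state \eqref{state} and the boundedness of the entropy, $\theta = (\bar{A}/R)\,\rho^{\gamma-1} e^{s(\gamma-1)/R}$, so $\theta = 0$ on $\Gamma(t)$ and consequently $p = R\rho\theta = 0$ there. Evaluating the temperature equation \subeqref{NS}{3} (with $\kappa = 0$) on $\Gamma(t)$, every term on the left-hand side carries a factor of $\rho$, $\theta$, or $p$ and thus vanishes, forcing the viscous dissipation $\mathcal{D} := \frac{\mu}{2}|\nabla\vu + (\nabla\vu)^\intercal|^2 + \lambda(\dv\vu)^2$ to vanish on $\Gamma(t)$. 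In the constant-viscosity case ($\alpha=0$), the spherically symmetric dissipation is a positive-definite quadratic form in $(u_r, u/r)$ (the associated $2\times 2$ symmetric matrix has determinant $4\bar\mu(2\bar\mu+3\bar\lambda)>0$), so $u = u_r = 0$ on $\Gamma(t)$. The kinematic condition \eqref{kinematic} then yields $R'(t) = u(R(t),t) = 0$, hence $R(t) \equiv 1$ and $u_0(1) = u_{0,r}(1) = 0$.

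For the contradiction, I next evaluate the radial momentum equation as $r \to 1^-$. The convective and time-derivative terms $\rho(u_t + uu_r)$ vanish (factor of $\rho$), and the viscous term $(2\mu+\lambda)(u_{rr} + 2u_r/r - 2u/r^2)$ reduces to $(2\mu+\lambda)u_{rr}$, giving the boundary identity $p_r = (2\mu+\lambda)\, u_{rr}$ at $r=1$. At $t=0$, \eqref{H1-1} combined with $\theta_0 \sim \rho_0^{\gamma-1}$ (entropy bounded) yields $p_0 \sim d(y)^{\delta\gamma}$, so $p_{0,r}(r)$ is of order $-c(1-r)^{\delta\gamma-1}$ with $c>0$ for $r$ just below $1$. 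Matching to the momentum identity forces $u_{0,rr}$ to be strictly negative in a one-sided neighborhood of $r=1$; integrating twice inward from $r=1$, using $u_0(1) = u_{0,r}(1) = 0$, produces $u_0(r) < 0$ for $r$ slightly below $1$, directly contradicting \eqref{H2*}.

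The main technical obstacles are (i) extending the dissipation-vanishing argument to the degenerate, temperature-dependent viscosity case $0 < \alpha \leq 1/(\gamma-1)$, where $\mu,\lambda$ themselves vanish on $\Gamma(t)$ and the quadratic-form argument is no longer direct. I expect to handle this by dividing the entropy equation $\rho\theta\,D_t s = \mathcal{D}$ by $\theta^\alpha$ to recover a positive-definite form $\bar\mu |\nabla\vu+(\nabla\vu)^\intercal|^2/2 + \bar\lambda(\dv\vu)^2$ on the right, and controlling the resulting one-sided limit via the regularity assumption $s_t\in L^2(0,T;L^\infty)$ together with the bound $\alpha(\gamma-1)\leq 1$ on the decay rate of $\rho\theta^{1-\alpha}$. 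A secondary obstacle is a case analysis in the momentum-matching step based on the sign of $\delta\gamma - 1$: when $\delta\gamma < 1$, the pressure gradient $p_{0,r}$ already blows up near $r=1$, violating $\vu\in W^{2,\infty}$ directly; for $\delta\gamma \geq 1$, the matching argument above produces the required sign obstruction on $u_0$, and one must also track the extra terms from $\partial_r\mu$ and $\partial_r\lambda$ in the temperature-dependent case, which turn out to be subdominant thanks to $u=u_r=0$ at the boundary.
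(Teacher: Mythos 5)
Your first step (forcing $u=u_r=0$ on $\Gamma(t)$ from the vanishing of the viscous dissipation) is essentially the paper's Proposition \ref{prop-v-3}, although the paper derives it quantitatively from $s_t\in L^2(0,T;L^\infty)$ via the entropy variable $\zeta\sim e^{S(\gamma-1)/R}$ (giving a decay rate $|v_x|,|v|\lesssim d^{\delta(\gamma-\alpha(\gamma-1))/2}$) rather than by evaluating \subeqref{NS}{3} pointwise on the boundary, where $\theta_t$ and $\theta_r$ are not assumed continuous. The genuine problem is your second step. In the momentum balance near $r=1$ at $t=0$ you discard $\rho_0\partial_t u$ as carrying "a factor of $\rho$", but this term is $O(d^{\delta})$ with \emph{no sign information} (only $u_t\in L^\infty$ is assumed), while the pressure gradient is $O(d^{\delta\gamma-1})$. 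Since $\delta\gamma-1\geq\delta$ exactly when $\delta\geq 1/(\gamma-1)$, the pressure gradient does \emph{not} dominate the inertial term for $\delta\geq 1/(\gamma-1)$ — a range the theorem must cover, including the physical-vacuum rate — and the claimed sign $u_{0,rr}<0$ near $r=1$ does not follow. Indeed, for data with $u_0\equiv 0$ near the boundary the momentum equation at $t=0$ merely reads $\rho_0 u_t=-p_{0,r}+(\text{viscous})$ and imposes no constraint on $u_{0,rr}$ at all; this also explains why your argument never needs the strict inequality $u_0>0$ at $r=r_0$ in \eqref{H2*}, which should be a warning sign, since a purely static ($t=0$) obstruction cannot be what the theorem expresses.

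The paper's contradiction is instead dynamical. In Lagrangian coordinates the momentum equation becomes a differential \emph{inequality} $\bigl(\tfrac{x^2\rho_0}{\eta^2}\partial_t+\sL_3\bigr)v=-\bigl(\tfrac{Rx^{2\gamma}\rho_0^\gamma\zeta}{\eta^{2\gamma}\eta_x^\gamma}\bigr)_x>0$ on $(1-d_0,1)\times(0,T']$, because for $d_0$ small the derivative of the pressure term has a definite sign by \eqref{H1-1} — this is the only place the pressure enters, and no comparison with $\rho_0 u_t$ is needed since the time derivative is kept inside the (degenerate) parabolic operator. The strong maximum principle (\cref{prop-strong}, with coefficients degenerating like $\rho_0$ and $\theta^\alpha$, which is where $\alpha(\gamma-1)\leq1$ is used) propagates $v>0$ from the strict positivity at $r_0$ in \eqref{H2*} to all of $(r_0,1)\times(0,T']$, and Hopf's lemma (\cref{prop-Hopf}) applied at a positive time then gives $v_x(1,t_0)<0$, contradicting $v_x(1,t)=0$ from step one. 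To repair your proposal you would need to replace the $t=0$ matching argument by some version of this forward-in-time propagation; as written, the core contradiction mechanism fails.
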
 

\begin{remark}\label{rem-regularity}  The regularity of $(\rho,u,\theta)$ in Theorem \ref{thm-3} is weaker than those in \cite{Li.W.X2019} ($ C^1([0,T];H^m(\Omega(t)))$, $m\geq3$). In particular, the derivatives of $\rho,\theta$ are not required continuous up to the boundary. 
	
When $2\mu+3\lambda=0$ and $\kappa=0$, the well-posedness theory of the classical solutions is quite different: the author (\cite{Liu.Y2019a}) proved the existence of global-in-time classical solutions with bounded entropy; moreover, the entropy of the self-similar solutions founded in \cite{Liu.Y2019a} is independent of time along the flow map, and thus if their entropy and entropy derivatives are bounded initially, they remain so in the coming future.
\end{remark}
Since  $s, s_t$ and $s_y$ are also continuous in  $\Omega(t)$, thanks to this regularity of $\rho, \theta$ and the positivity of $\rho$ in $\Omega(t)$, \eqref{S-assump-3} can be regarded as a condition describing the boundary behaviors of $s,s_t$ and $ s_x$. 

\begin{theorem}\label{thm-3'}
	In the three-dimensional spherically symmetric case with nontrivial heat conductivity (i.e., $\bar{\kappa}>0$), 
	assume that $0\leq \alpha\leq\frac{1}{\gamma-1}$.
	Then the solution $(\rho, u, \theta)$ to the vacuum free boundary problem \eqref{NS}, \eqref{boundary}, \eqref{kinematic}, \eqref{initial} and \eqref{boundary-2}  with 
	$$\rho \in C^{1,1}(\Omega(t) \times [0,T]) \cap C(\overline{\Omega(t)}\times [0,T]), $$
	$$\theta\in  C^2_1(\Omega(t) \times [0,T]) \cap C(\overline{\Omega(t)}\times [0,T]) 
	,$$
	$$u\in C^2_1(\Omega(t) \times [0,T]) \cap L^{\infty}(0,T; W^{2,\infty} (\Omega(t)) ), $$
	$$u_t \in  L^{\infty} (0,T; L^\infty(\Omega(t)))$$ for any positive time $T$, has to satisfy that 
	 \begin{align} 
	 	& \nabla_\vn (\theta) < 0 \quad \text{on~} \Gamma(t). \label{physical-vacuum-theta}
	 \end{align}
    Moreover, if initially the first line of \eqref{H1-1} is satisfied and $\theta\in L^{\infty}(0,T; H^1_0 (\Omega(t)) )$, then the entropy is bounded in space-time if and only if $\delta=1/(\gamma-1)$.
\end{theorem}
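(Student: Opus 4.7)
The theorem has two distinct conclusions, and I would treat them in sequence: first the Hopf-type lower bound \eqref{physical-vacuum-theta}, and then the sharp characterization of entropy boundedness.

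For \eqref{physical-vacuum-theta}, I would rewrite the temperature equation $\eqref{NS}_3$ in spherically symmetric Lagrangian coordinates so that the free boundary is fixed at $r=1$. Since the dissipation on the right-hand side is nonnegative and $\rho$ stays strictly positive on any compact subset of $\Omega(t)$ by transport of $\rho_0>0$, a strong maximum principle applied to the resulting (possibly degenerate) parabolic equation for $\theta$ forces $\theta>0$ throughout $\Omega(t)$ for $t\in(0,T]$. The boundary behaviour \eqref{physical-vacuum-theta} then reduces to a Hopf-type lemma at $r=1$. When $\alpha=0$ this is the classical parabolic Hopf lemma. When $\alpha>0$ the heat flux can be written as
\begin{equation*}
\kappa\nabla\theta=\bar\kappa\theta^\alpha\nabla\theta=\frac{\bar\kappa}{\alpha+1}\nabla(\theta^{\alpha+1}),
\end{equation*}
so the ellipticity degenerates on $\Gamma(t)$; to handle this I would construct a time-independent sub-barrier of the form $\underline\theta(r)=\varepsilon(1-r)^{\beta}$ with exponent $\beta\ge1$ tuned so that, when plugged into the transformed parabolic equation satisfied by $\theta$, the degenerate diffusion dominates the transport, pressure-work, and dissipation terms on a one-sided collar $\{r_0\le r\le 1\}$. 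Comparison on this collar yields $\theta(r,t)\ge\varepsilon(1-r)^{\beta}$, which forces the inward Dini derivative of $\theta$ at $r=1$ to be strictly positive, i.e.\ \eqref{physical-vacuum-theta}.

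For the entropy characterization, I invoke the Gibbs--Helmholtz identity \eqref{state}, which can be recast as
\begin{equation*}
s=c_v\log\theta-R\log\rho+\mathrm{const},
\end{equation*}
so $s$ is uniformly bounded on $\overline{\Omega(t)}\times[0,T]$ if and only if $\theta$ and $\rho^{\gamma-1}$ are comparable up to $\Gamma(t)$. The continuity equation $\eqref{NS}_1$ together with $\vu\in L^\infty(0,T;W^{2,\infty})$ gives a bi-Lipschitz flow map on a boundary collar, so the first line of \eqref{H1-1} propagates to $\rho(x,t)\sim d(x,t)^{\delta}$ near $\Gamma(t)$ for all $t\in[0,T]$. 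From the first part and the $C^{2}_{1}$ regularity of $\theta$, I obtain the lower bound $\theta(x,t)\gtrsim d(x,t)$. The matching upper bound $\theta(x,t)\lesssim d(x,t)$ I would establish by a super-barrier comparison of $\theta$ with $Cd(x,t)$, in which the assumption $\theta\in L^\infty(0,T;H^1_0(\Omega(t)))$ supplies the vanishing trace on $\Gamma(t)$ and $\vu\in W^{2,\infty}$ controls the lower-order terms. Combining these gives $\theta/\rho^{\gamma-1}\sim d^{1-\delta(\gamma-1)}$ near $\Gamma(t)$, and $s$ is bounded in space-time if and only if $1-\delta(\gamma-1)=0$, i.e.\ $\delta=1/(\gamma-1)$.

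The principal obstacle is the Hopf-type argument in the degenerate regime $\alpha>0$: the convective coefficient $c_v\rho$, the pressure-work coefficient $R\rho\theta$, and the diffusion coefficient $\bar\kappa\theta^\alpha$ all vanish on $\Gamma(t)$ but a priori at different rates, so the barrier exponent $\beta$ must be chosen to balance these against each other uniformly up to the vacuum boundary; the constraint $0\le\alpha\le 1/(\gamma-1)$ is what makes such a choice possible. A secondary technical difficulty is the upper bound $\theta\lesssim d$, for which the interior regularity $C^{2}_{1}$ alone is insufficient and one must carefully combine the $H^1_0$ hypothesis with the maximum principle for the temperature equation to close the barrier comparison.
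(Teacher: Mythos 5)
Your overall architecture matches the paper's: pass to spherically symmetric Lagrangian coordinates, use the nonnegativity of the viscous dissipation and a strong maximum principle to get $\theta>0$, prove \eqref{physical-vacuum-theta} by a boundary-point (Hopf) barrier argument for the degenerate parabolic operator, and then read off the entropy dichotomy from $e^{S(\gamma-1)/R}\sim\theta/\rho^{\gamma-1}$ together with $\rho\sim d^{\delta}$ and $\theta\sim d$. The entropy half of your argument is essentially the paper's; the paper obtains $\Theta\sim\abs{\nabla_\vn\Theta}{}\,d$ directly from the vanishing trace and the Hopf conclusion rather than by a second barrier comparison, but that difference is cosmetic.

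The gap is in the Hopf step, and it is structural rather than technical: the barrier family $\underline\theta(r)=\varepsilon(1-r)^{\beta}$ with $\beta\ge1$ cannot deliver \eqref{physical-vacuum-theta}. If $\beta>1$, the conclusion $\theta\ge\varepsilon(1-r)^{\beta}$ only yields $\partial_r\theta(1,t)\le\lim_{r\to1}\bigl(-\varepsilon\beta(1-r)^{\beta-1}\bigr)=0$, so no strict sign is obtained. If $\beta=1$, the diffusion term acting on the barrier vanishes identically, and what survives of $L\underline\theta$ is $\varepsilon\bigl(b-c(1-r)\bigr)$ with drift coefficient $b=\kappa\bigl(2/\eta-\eta_{xx}/\eta_x^{2}\bigr)\approx 2\kappa/\eta>0$ near the boundary; this has the wrong sign for $\underline\theta$ to be a subsolution, so the comparison never closes. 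There is also a secondary omission: the zeroth-order coefficient coming from the pressure-work term $R\rho\theta\,\dv\vu$ is sign-indefinite, so one must first conjugate by $e^{-Ct}$ and exploit that the boundary minimum is exactly $0$ (this is Corollary \ref{cor-strong} in the paper). The paper's resolution is the Friedman-type construction adapted to the degenerate setting: the barrier $q=e^{-\beta[(x-1+l)^2+(t_0-t)]}-e^{-\beta l^{2}}$ on a parabolic half-ball touching $\Gamma$ only at $(1,t_0)$ has a strictly negative spatial derivative at that point, and after dividing the required inequality by the diffusion coefficient $a$ it reduces to $-C^{-1}\beta^{2}+C\beta+C\le0$ for $\beta$ large, which uses only the boundedness of the ratios $a_0/a$, $b/a$, and $cd/a$ — exactly what the hypothesis $0\le\alpha(\gamma-1)\le1$ guarantees (Proposition \ref{prop-Hopf} and condition \eqref{L-coeff}). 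To repair your argument you would need to replace the pure power barrier by such a construction, or at least by a two-term barrier of the form $(1-r)+M(1-r)^{2}$ whose normal derivative at the contact point is nonzero and for which the normalized inequality can be verified; as written, the main assertion of the theorem does not follow.
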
 

\begin{remark}
	$\theta\in L^{\infty}(0,T; H^1_0 (\Omega(t)) )$ is the usual improved regularity for the solutions to parabolic equations. In fact, when the transport coefficients are constants and $2\mu+3\lambda>0$, $\kappa>0$,  the authors (\cite{Liu.Y2019}) proved the existence of locally-in-time solutions $(\rho,u,\theta)$ satisfying $u,\theta \in  L^{\infty}(0,T; H^3 (\Omega(t)) ),$ and $u_t,\theta_t \in  L^{\infty}(0,T; H^3 (\Omega(t)) )$, and such solutions could be classical with more regular initial data.
	
	In \cite{Liu.Y2019}, \eqref{physical-vacuum-theta} is required initially but only used to ensure $\Theta>0$ in $\Omega(t)\times[0,T]$. In this paper, we find that \eqref{physical-vacuum-theta} and $\Theta>0$ can be automatically fulfilled thanks to the maximum principle of the degenerate parabolic type equation.
\end{remark}

\eqref{physical-vacuum-theta} in Theorem \ref{thm-3'} also holds without the spherical symmetry, since the Hopf's lemma and maximum principles for the degenerate parabolic operator hold for general three-dimensional domains. We prove it in the spherical symmetry for simplicity.

In the one-dimensional case, without the special structure of the equations in three-dimensional spherically symmetric case, there has to be more restrictions on the initial data. 
\begin{theorem}\label{thm-1}
	In the one-dimensional case without heat conductivity (i.e., $\kappa=0$), 
    assume that $0\leq \alpha\leq\frac{1}{\gamma-1},$ and $\rho_0$ satisfies \eqref{H1-1}, and
	\begin{equation}\label{H3}
	\begin{gathered}
		0<\frac{1}{\delta(\gamma-1)}<\min \{ \frac{1}{2}(\frac{\gamma-2}{\gamma-1}+\alpha), 1 \}, \quad 
		\frac{ u_0}{d^{1/2}}\in L^{2}(0,1),
	\end{gathered}
	\end{equation}
	and
	\begin{equation}\label{H2}
	\begin{aligned}
	\exists ~ Y_0 \in (0,d_0 ) ~ \text{such that}~ &u_0(Y_0)<0 ~\text{and} ~ u_0(y)\leq 0 ~\text{for} ~ \forall~0<y<Y_0,  \\
	\text{or}~~~~ \exists ~ Y_0  \in (1-d_0,1) ~ \text{such that}~ & u_0(Y_0)>0 ~\text{and} ~ u_0(y)\geq 0 ~ \text{for} ~ \forall~Y_0<y<1,
	\end{aligned} 
	\end{equation}
     where $ d_0 \in(0,1) $ is a small constant depending only on $ C_1 $ and $C_2 $ in \eqref{H1-1}.
	Then the vacuum free boundary problem \eqref{NS}, \eqref{boundary}, \eqref{kinematic}, \eqref{initial}, and \eqref{boundary-2} has no solution $(\rho,u,\theta)$ satisfying 
    $$
    \begin{aligned}
    	&\rho, \theta\in C^{1,1}(\Omega(t) \times [0,T]) \cap C(\overline{\Omega(t)}\times [0,T]), \\
    	&u\in C^2_1(\Omega(t) \times [0,T]) \cap C( \overline{\Omega(t)} \times[0,T]) \\
    	&\qquad\cap L^{\infty}(0,T; W^{2,\infty} (\Omega(t)) ),	\\
    	&u_t \in L^{\infty} (0,T; L^\infty(\Omega(t)))
    \end{aligned}
    $$
	with entropy $s$ satisfying 
	\begin{align}
		&s\in L^{\infty}(0,T; L^{\infty}(\Omega(t))), \quad s_t\in L^2(0,T;L^{\infty}(\Omega(t))) \label{S-assump-1}
	\end{align} 
	for any positive time $T$.
\end{theorem}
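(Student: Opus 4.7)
The plan is to work in Lagrangian coordinates so that the free boundaries $\Gamma(t)$ become the fixed endpoints $y=0$ and $y=1$. Writing $X(y,t)$ for the particle trajectory and $J = X_y$ for the Jacobian, the assumed bound $u \in L^\infty(0,T;W^{2,\infty})$ together with $J_t = u_x^L J$ yields $J \in [C^{-1},C]$; combined with \eqref{H1-1}, the equation of state \eqref{state}, and the $L^\infty$ bound on $s$ this gives the boundary asymptotics $\rho^L(y,t) \sim d(y)^{\delta}$, $\theta^L(y,t) \sim d(y)^{\delta(\gamma-1)}$, and hence $(2\mu^L + \lambda^L)(y,t) \sim d(y)^{\delta \alpha(\gamma-1)}$ near the boundary.

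\textbf{Two key estimates.} I would then exploit two identities simultaneously. First, the Lagrangian momentum equation rewrites as $\rho_0 \partial_t u^L = \partial_y w^L$ with Lagrangian stress $w^L := (2\mu^L + \lambda^L) u^L_y/J - p^L$ that vanishes at $y=0,1$ by the stress-free boundary condition \eqref{boundary}; using $\partial_t u^L \in L^\infty$ and $\rho_0 \sim d^\delta$, integration from the boundary gives $|w^L(y,t)| \leq C d(y)^{\delta + 1}$. Second, in the absence of heat conduction the Gibbs relation produces the entropy equation $\rho^L \theta^L \partial_t s^L = (2\mu^L + \lambda^L)(u^L_y / J)^2$; integrating in $t$ and using $\|s\|_{L^\infty} \leq C$ yields the pointwise-in-$y$ bound $\int_0^T (u^L_y(y,\tau))^2 \, d\tau \lesssim d(y)^{\delta[\gamma - (\gamma-1)\alpha]}$ near the boundary, the exponent being positive thanks to $\alpha \leq 1/(\gamma-1)$. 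A Cauchy--Schwarz step then controls $\|u^L(y,\cdot) - V(\cdot)\|_{L^2(0,T)}$ by $d(y)^{\delta[\gamma - (\gamma-1)\alpha]/2 + 1}$, where $V(t) := u^L(0,t)$ denotes the boundary velocity.

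\textbf{Contradiction.} Continuity of $u^L$ in time, together with $V(0) = u_0(0) = 0$ forced by $u_0/d^{1/2}\in L^2$ and continuity of $u_0$, allows me to compare the above $L^2$-in-time estimate with the near-$t=0$ expansion $u^L(y,t) - V(t) = u_0(y) + O(t)$, yielding a pointwise bound $|u_0(y)| \leq C d(y)^{\sigma}$ for $y$ near the boundary, with $\sigma$ determined by balancing the preceding exponents. The condition \eqref{H3} is tailored precisely so that $\sigma$ is strictly larger than what is compatible with the non-triviality and sign structure \eqref{H2} of the initial velocity; a localization argument around $Y_0$, using the strict inequality $u_0(Y_0) \neq 0$ and the sign constraint $u_0 \leq 0$ (resp. $\geq 0$) throughout the boundary strip, then closes the contradiction.

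\textbf{Main obstacle.} The principal difficulty is the simultaneous degeneracy of the viscosity $(2\mu+\lambda) \sim d^{\delta\alpha(\gamma-1)}$ and of the density and temperature on the boundary, which makes the Lagrangian momentum equation degenerate parabolic and precludes direct use of the classical maximum principles exploited in the constant-coefficient Cauchy setting of \cite{Li.W.X2019}. Tracking the competing exponents $\delta\gamma$ (pressure), $\delta\alpha(\gamma-1)$ (viscosity) and $\delta$ (density) precisely, and closing the argument under the merely $W^{2,\infty}$-in-space control on $u$ (without higher time derivatives), is the technical heart of the proof; this is precisely why the finer hypothesis \eqref{H3} is needed in the 1D case, in contrast with the three-dimensional spherically symmetric setting of Theorem~\ref{thm-3}, where the special structure of the radial equations allows for weaker conditions on $\delta$ and $\alpha$.
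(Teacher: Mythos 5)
Your preparatory estimates largely track the paper's Proposition~\ref{prop-v}: the entropy (i.e.\ $\zeta$) equation together with $s_t\in L^2(0,T;L^\infty)$ gives exactly the paper's decay \eqref{decay-vx} for $u^L_y$, and your weighted $L^2$ control of $u^L/d^{1/2}$ mirrors the energy estimate \eqref{eq-v} that uses \eqref{H3}. The genuine gap is in your \textbf{Contradiction} step. What your argument can deliver is a pointwise decay bound $|u_0(y)|\leq C\,d(y)^{\sigma}$ as $y\to 0$ (or $y\to1$). But hypothesis \eqref{H2} imposes \emph{no rate} on $u_0$ near the boundary: it only requires $u_0(Y_0)<0$ at a \emph{fixed interior} point $Y_0$ together with a one-sided sign condition on $(0,Y_0)$. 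Initial data that vanish identically on $(0,Y_0/2)$ and dip to a negative value at $Y_0$ satisfy both \eqref{H2} and \eqref{H3}, yet satisfy $|u_0(y)|\leq C d(y)^\sigma$ for every $\sigma$ and every $C>0$ near the boundary. Since the constant $C$ in your bound depends on uncontrolled norms of the hypothetical solution, $C\,d(Y_0)^{\sigma}$ is just some positive number and cannot be played against the fixed value $|u_0(Y_0)|$. No choice of $\sigma$ closes the argument, and the "localization around $Y_0$" cannot repair this: the obstruction is structural, not a matter of exponent bookkeeping.

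What is missing is the dynamic mechanism that transports the interior sign information of \eqref{H2} to the boundary. The paper does this by observing that, near $x=0$, \eqref{H1-1} forces the pressure-gradient term to have a definite sign, so $v$ satisfies the one-sided degenerate parabolic inequality \eqref{sign-1}; the strong maximum principle (Proposition~\ref{prop-strong}, built in Section~\ref{sect-max} precisely for the degenerate operator whose coefficients you list as the obstacle) then propagates $v<0$ from $(0,Y_0)\times\{0\}$ to all of $(0,Y_0)\times(0,T']$, and Hopf's lemma (Proposition~\ref{prop-Hopf}) yields $v_x(0,t)<0$. This contradicts $v_x(0,t)=0$, which is the conclusion your second key estimate (and the paper's \eqref{decay-vx}) actually establishes. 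In short: your two estimates are the right ingredients for the \emph{boundary vanishing} of $v$ and $v_x$, but the contradiction must come from a maximum-principle/Hopf argument on the momentum equation, not from a decay estimate on $u_0$; as written, your proof does not reach a contradiction.
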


Note that \eqref{H3} does not hold for the physical vacuum condition ($\delta=1/(\gamma-1)$). Such a vacuum free boundary problem in the one-dimensional case might be well-posed locally for initial data with proper decaying rate to the boundary.

This work is organized as follows:  in Section 2 we prepare Hopf's lemma and the strong maximum principle for the degenerate parabolic operators used in our proof. In the subsequent sections, we first consider the one-dimensional case,  formulate the vacuum free boundary problem in the Lagrangian coordinates and prove Theorem \ref{thm-1}. Theorem \ref{thm-3} and Theorem \ref{thm-3'} are proved in Sections \ref{sect-proof-thm-3} and \ref{sect-proof-thm-3'}.

\section{The Maximum principles and Hopf's lemma} \label{sect-max}

In this section, we will prepare Hopf's lemma and the strong maximum principle for the degenerate parabolic operator 
\begin{equation}\label{def-L}
	Lw := a_0(x,t) \partial_t w- a(x,t) \partial_x^2 w -b(x,t) \partial_x w -c(x,t)w,
\end{equation}
where $a_0,a,b,$ and $c$ are continuous on $[0,d_0]\times [0,T]$ (or $[1-d_0,1]\times [0,T]$, respectively), and satisfy that, for some constant $ C \in (0,\infty) $, 
\begin{equation}\label{L-coeff}
	\begin{gathered}
		0 < a,~\frac{a_0}{a} < C \quad \text{in} ~ (0,d_0)\times [0,T] \qquad (\text{or in} ~ (1-d_0,1)\times [0,T], ~ \text{respectively} ),  \\
		\text{and} \qquad
		\frac{b}{a}>-C, ~ -\frac{Ca}{d} < c\leq 0 \quad \text{in} ~ [0,d_0]\times [0,T]\\
		(\text{or} \quad  \frac{b}{a}<C, ~ -\frac{Ca}{d} < c\leq 0  \quad \text{in} ~ [1-d_0,1]\times [0,T], ~ \text{respectively}).
	\end{gathered}
\end{equation}
Here $d_0 \leq 1/4$, and $d = \min \lbrace x, 1-x \rbrace$ is the distance to $0$ and $1$ defined as in the introduction, $a_0$ and $a$ might degenerate at the boundary.
Thanks to $ 0 \leq \alpha(\gamma-1) \leq 1 $, 
the operators $\rho_0 \partial_t +\sL_1 $, $\rho_0 \partial_t +\sL_3 ,$ and  $\rho_0 \partial_t +\tilde{\sL}_3 $ in \eqref{sign-1}, \eqref{sign-3}, and \eqref{sign-3'} respectively in the subsequent sections are the form of $L$.

The idea of the proof is classical; see \cite{Fried.1964} for the classical parabolic operator, and \cite{Li.W.X2019} for the parabolic operator with degenerate coefficient in temporal derivative. 
The crucial point is to verify that the auxiliary functions satisfy the same differential inequality \eqref{sign-w} in the case that of more degenerate coefficients; see \eqref{sign-phi} for instance.

Recall that for a bounded domain $D$ in $\mathbb{R}^n \times \mathbb{R}^+$, its parabolic boundary $\partial_p D$  is the subset of the boundary $\partial D$ such that for any point $(x_0,t_0)\in  \partial_p D$,  the set $B_l(x_0) \times (t_0-l^2, t_0] \cap (\mathbb{R}^n \times \mathbb{R}^+ )\setminus \overline{D}$ is nonempty for any small $l>0$. The non-negative and non-positive parts of a function $w$ are denoted as $w^+=\max\{w,0\}$ and $w^-=-\min\{w,0\}$, respectively, such that $ w = w^+ - w^- $.
We start with the weak maximum principle.
\begin{lemma}[Weak maximum principle]\label{lem-weak}
	Suppose that $Q$ is an open domain,  $Q_T:=Q\times (0,T]$, and  $w \in C^2_1 (Q_T) \cap C(\overline{Q_T})$.
	\begin{enumerate}[label = (\roman*),ref = (\roman*)]
		\item If $w$ satisfies	
		\begin{equation} \label{sign-w-1}
			Lw < 0 \quad\text{in} ~ Q_T,
		\end{equation}
		and $ w $ attains its non-negative maximum at $ (x_0,t_0) $, then $ (x_0,t_0) \in \partial_p Q_T $.
		\item If $ w $ satisfies
		\begin{equation} \label{sign-w}
			Lw \leq 0 \quad\text{in~} Q_T,
		\end{equation}
		then 
		\begin{equation}
			\max_{\overline{Q_T}} w \leq \max_{\partial_p Q_T} w^+.
		\end{equation}
		\item \label{w-min-prcp-1} If $w$ satisfies	
		\begin{equation} \label{sign-w-2}
			Lw > 0 \quad\text{in} ~ Q_T,
		\end{equation}
		and $ w $ attains its non-positive minimum at $ (x_0,t_0) $, then $ (x_0,t_0) \in \partial_p Q_T $.
		\item \label{w-min-prcp-2} If $ w $ satisfies
		\begin{equation} \label{sign-w'}
			Lw \geq 0 \quad\text{in~} Q_T,
		\end{equation}
		then 
		\begin{equation}
			\min_{\overline{Q_T}} w \geq -\max_{\partial_p Q_T} w^-.
		\end{equation}
	\end{enumerate}
	
\end{lemma}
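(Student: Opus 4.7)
The plan is to establish (i) by the standard interior-maximum derivative test, then derive (ii) from (i) via a perturbation argument, and finally obtain (iii) and (iv) by applying (i) and (ii) to $-w$.

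For (i), I would argue by contradiction: assume $w$ attains its non-negative maximum at some $(x_0,t_0)\in\overline{Q_T}\setminus\partial_p Q_T$. By the definition of the parabolic boundary, the point $(x_0,t_0)$ lies in $Q\times(0,T]$ with $x_0$ in the spatial interior of $Q$; in particular, \eqref{L-coeff} guarantees $a_0(x_0,t_0),a(x_0,t_0)>0$ at that point. The elementary calculus of a space-time maximum (one-sided in time at $t_0=T$) gives $\partial_t w(x_0,t_0)\ge 0$, $\partial_x w(x_0,t_0)=0$, and $\partial_x^2 w(x_0,t_0)\le 0$, so that, using $c(x_0,t_0)\le 0$ together with $w(x_0,t_0)\ge 0$,
\begin{equation*}
Lw(x_0,t_0) \;\ge\; -c(x_0,t_0)\,w(x_0,t_0) \;\ge\; 0,
\end{equation*}
which contradicts \eqref{sign-w-1}.

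For (ii), set $M:=\max_{\partial_p Q_T}w^+\ge 0$ and, for each $\epsilon>0$, introduce the auxiliary function $\tilde w_\epsilon:=w-M-\epsilon e^{t}$. A direct computation using $cM\le 0$ together with $a_0>0$ in $Q_T$ gives
\begin{equation*}
L\tilde w_\epsilon \;=\; Lw + cM - \epsilon e^{t}(a_0-c) \;\le\; -\epsilon e^{t}\,a_0 \;<\;0 \quad\text{in }Q_T,
\end{equation*}
while on $\partial_p Q_T$ we have $\tilde w_\epsilon \le M-M-\epsilon e^{t}\le -\epsilon<0$. Part (i) applied to $\tilde w_\epsilon$ then forces $\tilde w_\epsilon<0$ throughout $\overline{Q_T}$, because a non-negative maximum would have to occur on $\partial_p Q_T$, where $\tilde w_\epsilon$ is strictly negative; letting $\epsilon\to 0$ yields $w\le M$, which is (ii). Parts (iii) and (iv) follow by applying (i) and (ii) to $-w$, using $L(-w)=-Lw$ and $(-w)^+=w^-$.

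The only (mild) obstacle is the degeneracy of $a_0$ near $d=0$ and the fact that $c$ may blow up like $-Ca/d$ there. However, the argument uses only \emph{pointwise} strict positivity of $a_0$ inside $Q_T$, not a uniform positive lower bound, and this is exactly what \eqref{L-coeff} provides. The first-order coefficient $b$ plays no role at the critical point because $\partial_x w=0$ there, and the sign condition $c\le 0$ enters only through $-cw\ge 0$ at a non-negative maximum. This is why the classical template carries over essentially unchanged to the degenerate operator \eqref{def-L}; the genuinely new input from the paper—namely, the strong maximum principle and Hopf-type boundary information that detect the degeneracy through the ratio $c/a$ and $b/a$ in \eqref{L-coeff}—will only be needed in the subsequent refinements.
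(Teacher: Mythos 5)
Your proof is correct and follows essentially the same route as the paper: a pointwise derivative test at an interior non-negative maximum (resp. non-positive minimum) using $c\le 0$, followed by a small perturbation ($\epsilon e^t$ in your case, $\varepsilon t$ in the paper's) to pass from the strict to the non-strict differential inequality. The only cosmetic difference is that the paper proves (iii)--(iv) directly and leaves (i)--(ii) to the reader, while you prove (i)--(ii) and obtain (iii)--(iv) by replacing $w$ with $-w$; the content is identical.
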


\begin{proof}
	%
	We focus on the proofs of \ref{w-min-prcp-1} and \ref{w-min-prcp-2}. The proofs of the rest are similar and left to readers. 
	
	To prove \ref{w-min-prcp-1}, 
	assume that $ w $ attains its non-positive minimum at some point $(x_0,t_0)$ inside $Q_T$. 
	Since $c\leq0$, we have
	$$ w(x_0,t_0) \leq 0 ,\quad w_t(x_0,t_0) \leq 0, \quad w_x(x_0,t_0)=0, \quad w_{xx}(x_0,t_0)\geq 0,$$
	and thus $L w \leq 0$ at $(x_0,t_0)$, which contradicts \eqref{sign-w-2}.
	
	To prove \ref{w-min-prcp-2}, consider $\phi(\varepsilon,x,t):=w(x,t)+\varepsilon t$ for $\varepsilon>0$, which satisfies
	\begin{equation*}
		L\phi=Lw+\varepsilon a_0 > 0 \quad\text{in~} Q_T.
	\end{equation*}
	If $w$ attains a negative minimum at some point in $Q_T$, then $\phi(\varepsilon,\cdot)$ also attains a negative minimum at some point in $Q_T$, provided $\varepsilon>0$ is small enough. This is a contradiction to the conclusion \ref{w-min-prcp-1} for $\phi$. Therefore, \ref{w-min-prcp-2} is proved.
\end{proof}

\begin{proposition}[Hopf's lemma]
	\label{prop-Hopf}
	
	\begin{enumerate}[label = (\roman*),ref = (\roman*)]
		\item Suppose that $w \in C^2_1 ((0,d_0)\times (0,T]) \cap C([0,d_0]\times [0,T])$ satisfying the inequality \eqref{sign-w} throughout $(0,d_0)\times (0,T]$, and a point $(0,t_0)$ satisfying $w(0,t_0)\geq0$ and $w(0,t_0)>w(x,t)$ for any $(x,t)$ in the neighborhood $D$ of $(0,t_0)$:
		\begin{align*}
			&D:=\{(x,t);(x-l)^2+(t_0-t)<l^2,~0<x<\frac{l}{2},~0<t \leq t_0\}, \\
			&\qquad\qquad \text{where~} 0<l<d_0,~ t_0-\frac{3l^2}{4}>0,
		\end{align*}
		it holds that 
		$$ 
		\partial_x w(0,t_0)<0.
		$$
		
		\item \label{hopf-2} Suppose that $w \in C^2_1 ((1-d_0,1)\times (0,T]) \cap C([1-d_0,1]\times [0,T])$ satisfying the inequality \eqref{sign-w'} throughout $(1-d_0,1)\times (0,T]$, and a point $(1,t_0)$ satisfying $w(1,t_0)\leq0$ and $w(x,t)>w(1,t_0)$ for any $(x,t)$ in the neighborhood $D$ of $(1,t_0)$:
		\begin{align*}
			&D:=\{(x,t);(x-1+l)^2+(t_0-t)<l^2,~1-\frac{l}{2}<x<1,~0<t \leq t_0\}, \\
			&\qquad\qquad \text{where~} 0<l<d_0,~ t_0-\frac{3l^2}{4}>0,
		\end{align*}
		it holds that 
		$$
		\partial_x w(1,t_0) < 0.
		$$
	\end{enumerate}
\end{proposition}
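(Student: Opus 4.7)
The plan is to follow the classical exponential-barrier proof of Hopf's lemma, adapted by a single non-classical observation: the barrier will vanish linearly at the boundary point $(0,t_{0})$, and this offsets the singular lower bound $-c \leq Ca/d$ imposed on the zeroth-order coefficient. For part (i), I define
\[
\phi(x,t) := e^{-\alpha[(x-l)^{2}+(t_{0}-t)]} - e^{-\alpha l^{2}},
\]
with $\alpha>0$ to be chosen large. By construction $\phi(0,t_{0})=0$, $\phi=0$ on the curved face $(x-l)^{2}+(t_{0}-t)=l^{2}$, $\phi>0$ in $D$, and $\partial_{x}\phi(0,t_{0}) = 2\alpha l\,e^{-\alpha l^{2}} > 0$. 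Writing $\phi = e^{-\alpha l^{2}}(e^{u}-1)$ with $u := \alpha[l^{2}-(x-l)^{2}-(t_{0}-t)] \geq 0$ in $D$, the inequality $e^{u}-1 \leq u\,e^{u}$ combined with $l^{2}-(x-l)^{2} = x(2l-x) \leq 2lx$ yields the decisive bound
\[
\phi(x,t) \;\leq\; 2\alpha l\,x\, e^{-\alpha[(x-l)^{2}+(t_{0}-t)]} \qquad \text{in } D,
\]
so $\phi/d$ is bounded (with $d(x)=x$) despite $\phi$ being a difference of two non-vanishing exponentials.

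A direct computation produces
\[
L\phi = \alpha e^{-\alpha[\cdots]}\bigl[a_{0}+2a-4\alpha a(x-l)^{2}+2b(x-l)\bigr]-c\phi.
\]
The singular piece is absorbed by the decisive bound: $-c\phi \leq (Ca/d)\phi \leq 2C\alpha l\,a\,e^{-\alpha[\cdots]}$. The drift piece is controlled using only the one-sided assumption $b/a > -C$ together with $x-l<0$, which gives $2b(x-l) \leq 2Ca\,l$ regardless of the sign of $b$. Combining these with $a_{0}/a\leq C$ and $(x-l)^{2}\geq l^{2}/4$,
\[
L\phi \;\leq\; \alpha a\, e^{-\alpha[\cdots]}\bigl[(C+2) + 4Cl - \alpha l^{2}\bigr],
\]
which is strictly negative in $D$ for $\alpha$ chosen large (depending only on $C$ and $l$).

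With $L\phi<0$ secured, set $M := w(0,t_{0})\geq 0$ and $v := w + \varepsilon\phi - M$ for small $\varepsilon > 0$. Since $c\leq 0$ and $M\geq 0$, $Lv = Lw + \varepsilon L\phi + cM \leq 0$ in $D$. On the curved face $\phi=0$ and $w\leq M$ by continuity, so $v\leq 0$; on the vertical face $\{l/2\}\times[t_{0}-3l^{2}/4,t_{0}]$, replacing $l$ by a slightly smaller value if necessary, one arranges strict inequality $w<M$ on a compact set bounded away from $(0,t_{0})$, which furnishes $M-w\geq \delta_{0}>0$ and lets one pick $\varepsilon$ so that $v\leq 0$ there. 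The weak maximum principle (Lemma 2.1) then forces $v\leq 0 = v(0,t_{0})$ throughout $\bar D$, and restricting to $t=t_{0}$ and letting $x\to 0^{+}$ delivers $\partial_{x}w(0,t_{0}) \leq -\varepsilon\partial_{x}\phi(0,t_{0}) < 0$. Part (ii) is entirely symmetric: the reflected barrier $\phi(x,t)=e^{-\alpha[(x-1+l)^{2}+(t_{0}-t)]}-e^{-\alpha l^{2}}$ satisfies the analogous key bound $\phi\leq 2\alpha l(1-x)\,e^{-\alpha[\cdots]}$, the mirror condition $b/a<C$ together with $x-1+l>0$ controls the drift, and the minimum-principle half of Lemma 2.1 applied to $v := w - w(1,t_{0}) - \varepsilon\phi$ delivers $\partial_{x}w(1,t_{0}) < 0$. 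Throughout the proof, the only genuinely non-classical step is the linear-vanishing bound $\phi/d\lesssim 1$, which is exactly what makes the singular $c$-term absorbable into the principal part.
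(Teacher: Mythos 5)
Your proof is correct and follows essentially the same route as the paper's: the same exponential barrier $e^{-\alpha[(x-l)^2+(t_0-t)]}-e^{-\alpha l^2}$, the same key observation that this barrier vanishes linearly in the distance $d$ (so the singular zeroth-order coefficient with $|c|\leq Ca/d$ is absorbed into the principal part), the same one-sided use of $b/a$ together with the sign of $x-l$, and the same appeal to the weak maximum principle on $D$. The only differences are cosmetic — you work out part (i) where the paper works out part (ii), and you explicitly handle the strict inequality on the vertical face by shrinking $l$, a point the paper passes over.
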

\begin{proof}
	It suffices to prove \ref{hopf-2}. For $(x,t)\in D$, we define the auxiliary functions 
	\begin{gather*}
		q(\beta,x,t)=e^{-\beta[(x-1+l)^2+(t_0-t)] }-e^{-\beta l^2},\\
		\phi(\varepsilon,\beta,x,t) =w(x,t)- w(1,t_0) -\varepsilon q(\beta,x,t),
	\end{gather*}
	where $\beta$ and $\varepsilon$ are constants to be determined. 
	
	On the boundary $\Sigma_1$ defined by (away from the point $(0,t_0)$)
	$$\Sigma_1:=\{(x,t); (x-1+l)^2+(t_0-t)<l^2,~x=1-\frac{l}{2},~0\leq t < t_0\},$$
	there exists $ \varepsilon_0 \in (0,1) $ small enough such that
	$w(x,t)- w(1,t_0)>\varepsilon_0$. Since $0\leq q\leq 1$ in $D$, $\phi(\varepsilon_0,\beta,x,t)>0$ on $\Sigma_1$.
	
	While on the boundary $ \Sigma_2 $ defined by
	$$\Sigma_2:=\{(x,t); (x-1+l)^2+(t_0-t)=l^2,~1-\frac{l}{2}\leq x\leq1,~0\leq t < t_0\},$$
	$q=0$, and thus $\phi(\varepsilon_0,\beta,x,t)\geq 0$ on $\Sigma_2$ due to the property of $w(1,t_0)$.
	
	Now we check the sign of $L \phi$. Since $w$ satisfies \eqref{sign-w'}, we have
	$$
	L \phi(\varepsilon_0,\beta,x,t) \geq c w_0(1,t_0)-\varepsilon_0 L q(\beta,x,t)
	\geq -\varepsilon_0 L q(\beta,x,t).$$ 
	Moreover, direct calculation yields that
	\begin{align*}
			-ce^{\beta[(x-1+l)^2+(t_0-t)] }q(\beta,x,t)&= -c (1 - e^{\beta[(x-1+l)^2+(t_0-t)] - \beta l^2}) \\
			& \leq -c (-1) (\beta[(x-1+l)^2+(t_0-t)] - \beta l^2)\\
			& \leq -c \beta (l^2 - (x-1+l)^2)
			\leq - 2 \beta l c d \\
			& \leq 2 \beta l C a, \quad \text{since~} -\frac{Ca}{d} < c\leq 0,
	\end{align*}  
    and therefore, one has
    \begin{align*} &e^{\beta[(x-1+l)^2+(t_0-t)]} L q(\beta,x,t)\\
    	= & -4a(x-1+l)^2\beta^2 + a[\frac{a_0}{a} +2+2\frac{b}{a}(x-1+l)]\beta \\
    	&  -ce^{\beta[(x-1+l)^2+(t_0-t)]}q\\
    	\leq &a(-C^{-1} \beta^2 + C \beta + 2\beta l C + C).
    \end{align*}
	Therefore, when $\beta_0$ is large enough, 
	\begin{equation}\label{sign-phi}
		L \phi(\varepsilon_0,\beta_0,x,t)
		\geq -\varepsilon_0 e^{-\beta_0[(x-1+l)^2+(t_0-t)]} a(-C^{-1} \beta_0^2 + C \beta_0 +C)>0. 
	\end{equation}
	
	To summarize, with small $\varepsilon_0$ and large $\beta_0$, one has 
	\begin{equation}
		\begin{cases}
			L\phi(\varepsilon_0,\beta_0,x,t)
			>0  & \text{in~} D,\\
			\phi(\varepsilon_0,\beta_0,x,t)\geq0 &\text{on~} \partial_{p} D.
		\end{cases}
	\end{equation}
	It follows from \ref{w-min-prcp-2} in Lemma \ref{lem-weak} that 
	$$\phi(\varepsilon_0,\beta_0,x,t)\geq0 \quad \text{in~} D.$$
	Consequently, since $ \phi(\varepsilon_0,\beta_0,1,t) = 0 $, one can conclude that $ 
	\partial_x \phi(\varepsilon_0,\beta_0,1,t_0)
	\leq 0 $,  and 
	$$
	\partial_x w(1,t_0) \leq \varepsilon_0 \partial_x q(\beta_0,1,t_0) < 0.
	$$
\end{proof}

We remark that the boundedness of $\frac{a_0}{a}$, i.e., the degenerate rate of the spatial derivative coefficient is smaller than that of the temporal derivative coefficient, ensures that the maximum principles and Hopf's lemma work.
The proofs of the strong maximum principle from Hopf's lemma are routine, and thus they are omitted here. We refer interested readers to \cite{Fried.1964,Li.W.X2019} for details.

\begin{proposition}[Strong maximum principle]
	\label{prop-strong} There exist small enough $d_0$ such that
	
	(i)  for any $w \in C^2_1 ((0,d_0)\times (0,T]) \cap C([0,d_0]\times [0,T])$ satisfying the inequality \eqref{sign-w} throughout $(0,d_0)\times (0,T]$, if $w$ attains its non-negative maximum at some interior point $(x_0,t_0)$ of $(0,d_0)\times (0,T]$, then $w\equiv w(x_0,t_0)$ in $(0,d_0)\times (0,T]$;
	
	(ii)  for any $w \in C^2_1 ((1-d_0,1)\times (0,T]) \cap C([1-d_0,1]\times [0,T])$ satisfying the inequality \eqref{sign-w'} throughout $(1-d_0,1)\times (0,T]$, if $w$ attains its non-positive minimum at some interior point $(x_0,t_0)$ of $(1-d_0,1)\times (0,T]$, then $w\equiv w(x_0,t_0)$ in $(1-d_0,1)\times (0,T]$.
\end{proposition}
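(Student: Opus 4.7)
The plan is to derive Proposition \ref{prop-strong} from Hopf's lemma (Proposition \ref{prop-Hopf}) and the weak maximum principle (Lemma \ref{lem-weak}) via the standard Nirenberg-type argument, carrying over the barrier computation \eqref{sign-phi} to interior parabolic disks. I focus on part (i); part (ii) is entirely analogous via the obvious reflection, the sign changes already built into \eqref{L-coeff}, and part \ref{hopf-2} of Proposition \ref{prop-Hopf}. Set $M:= w(x_0,t_0) \geq 0$ and assume for contradiction that $w\not\equiv M$ on $Q_{t_0}:=(0,d_0)\times(0,t_0]$. The argument is carried out in two stages: propagation of the maximum along the time slice $\{t=t_0\}$, and propagation backward in time.

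\textbf{Stage 1 (horizontal propagation).} I would first show $w(\cdot,t_0)\equiv M$ on $(0,d_0)$. If this fails, by continuity there exist $x_1,x^*\in(0,d_0)$ with $w(x_1,t_0)<M=w(x^*,t_0)$ and $w<M$ on the open segment between them. I pick a small parabolic disk $D_r:=\{(x,t):(x-\bar x)^2+(t_0-t)<r^2,\ 0<t\leq t_0\}$ contained in $\{w<M\}$ and tangent to $\{w=M\}$ at $(x^*,t_0)$; this is possible because $w(\cdot,t_0)$ is continuous and $(x^*,t_0)$ is isolated from $\{w=M\}$ on one side. Apply the barrier argument of Proposition \ref{prop-Hopf} to $\tilde w:=M-w$, which satisfies $L\tilde w\geq -cM\geq 0$ since $c\leq 0$ and $M\geq 0$. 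The computation leading to \eqref{sign-phi} applies verbatim on $D_r$ (the coefficients are evaluated at interior points where $a,a_0,b,c$ are regular), yielding $\partial_x\tilde w(x^*,t_0)<0$, that is $\partial_x w(x^*,t_0)>0$. But $(x^*,t_0)$ is an interior maximum of $w$ in $Q_{t_0}$, so $\partial_x w(x^*,t_0)=0$, a contradiction. Hence $w(\cdot,t_0)\equiv M$ on $(0,d_0)$.

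\textbf{Stage 2 (backward propagation).} Let $t_*:=\inf\{t\in(0,t_0]:w(\cdot,\tau)\equiv M\text{ on }(0,d_0)\text{ for all }\tau\in[t,t_0]\}$. Stage 1 shows the infimum is well-defined and continuity yields $w(\cdot,t_*)\equiv M$. If $t_*>0$, choose $\bar x\in(0,d_0)$ and $\bar t<t_*$ with $w(\bar x,\bar t)<M$, and construct a parabolic disk in $(0,d_0)\times(0,t_*]$ tangent from inside $\{w<M\}$ to the line $\{t=t_*\}$ at some point $(x^{**},t_*)$. Running the same barrier argument with $\tilde w=M-w$ (and now differentiating in $t$) forces $\partial_t w(x^{**},t_*)<0$, contradicting $w(\cdot,t_*)\equiv M$. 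Hence $t_*=0$ and $w\equiv M$ on $Q_{t_0}$, contradicting the initial assumption.

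\textbf{Main obstacle.} The only nonroutine point is the degeneration of $a_0$, $a$ and the singular lower-order term $c\sim -a/d$ as $x\to 0$. This is handled by localization: since $(x^*,t_0)$ and $(x^{**},t_*)$ are strict interior points of $(0,d_0)$, I may shrink $r$ so that $D_r\subset[\eta,d_0-\eta]\times[0,t_0]$ for some $\eta>0$, on which the ratios $a_0/a$, $|b|/a$ and the coefficient $c$ are uniformly bounded by \eqref{L-coeff}. The barrier choice $q(\beta,x,t)=e^{-\beta[(x-\bar x)^2+(t_0-t)]}-e^{-\beta r^2}$ and $\phi=\tilde w-\varepsilon q$ then satisfies the analog of \eqref{sign-phi} for $\beta$ large and $\varepsilon$ small, exactly as in the proof of Proposition \ref{prop-Hopf}; the remainder of the argument is identical to the classical treatment in \cite{Fried.1964, Li.W.X2019} and is therefore omitted in the paper.
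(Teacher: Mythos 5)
Your overall route---deriving the strong maximum principle from Hopf's lemma by first propagating the extremum along the time slice $\{t=t_0\}$ and then backward in time, localizing to compact subsets of $(0,d_0)$ so that the degeneracy of $a_0,a$ and the singularity of $c$ in \eqref{L-coeff} become harmless---is exactly the ``routine'' argument the paper has in mind when it defers to \cite{Fried.1964,Li.W.X2019}. Stage 1 is essentially correct: the computation $L\tilde w\ge -cM\ge 0$ is right, and the barrier of Proposition \ref{prop-Hopf} does transplant to interior parabolic disks. (You do gloss over the standard subtlety that the inscribed region might first touch $\{w=M\}$ at a point whose time coordinate differs from $t_0$; the classical fix is the distance-function argument of Friedman's Lemmas 2--3 rather than a one-line continuity remark, but this is a presentational rather than a substantive issue.)

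Stage 2, however, has a genuine gap as written. First, the regions used in Proposition \ref{prop-Hopf} are of the form $(x-\bar x)^2+(t_0-t)<r^2$, $t\le t_0$, whose intersection with a horizontal line $\{t=t_*\}$ is an entire segment, so ``a parabolic disk tangent from inside to the line $\{t=t_*\}$ at a single point'' does not exist for these barriers; the classical backward-in-time propagation step uses a different auxiliary function (a paraboloid or rectangle barrier, as in Friedman's Lemma 4), not the spatial Hopf lemma. Second, the sign is reversed: for a maximum one would obtain $\partial_t\tilde w(x^{**},t_*)<0$, i.e.\ $\partial_t w(x^{**},t_*)>0$, not $<0$. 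Third, and most importantly, no value of $\partial_t w(x^{**},t_*)$ by itself ``contradicts $w(\cdot,t_*)\equiv M$'': constancy on a time slice constrains only the spatial derivatives there. The contradiction has to be closed differently, for instance by evaluating the operator on the slice ($w(\cdot,t_*)\equiv M$ forces $\partial_x w=\partial_x^2 w=0$, so $Lw\le 0$ yields $a_0\,\partial_t w\le cM\le 0$, which is incompatible with $\partial_t w>0$ since $a_0>0$ at interior points), or by invoking $w\le M$ at times $t>t_*$ when $t_*<t_0$. The skeleton of your Stage 2 is the right one, but the backward-in-time step needs the correct barrier and the correct source of contradiction before the proof is complete.
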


\begin{corollary} \label{cor-strong}
	Relaxing the constraints of $ c $ in \eqref{L-coeff} to $ -\frac{Ca}{d} < c\leq Ca_0$,
	the conclusions of \cref{prop-strong} continue to hold provided that the maximum of $w$ for (i) (or the minimum of $w$ for (ii)) is exactly $0$. 
\end{corollary}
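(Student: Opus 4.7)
The plan is to reduce Corollary \ref{cor-strong} to Proposition \ref{prop-strong} via the exponential change of variable $v(x,t) := e^{-\mu t} w(x,t)$ for a sufficiently large constant $\mu > 0$. A direct computation gives
\begin{equation*}
Lw = e^{\mu t}\bigl[a_0 v_t - a v_{xx} - b v_x - (c - \mu a_0)\, v\bigr] =: e^{\mu t}\, \widetilde L v,
\end{equation*}
so $\widetilde L$ shares the coefficients $a_0, a, b$ with $L$ but replaces the zeroth-order coefficient $c$ by $\tilde c := c - \mu a_0$. The first step is to verify that $\widetilde L$ satisfies the structural hypothesis \eqref{L-coeff} of \cref{prop-strong} with a possibly enlarged constant $C'$. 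Since $c \leq C a_0$, choosing $\mu \geq C$ forces $\tilde c \leq 0$. For the lower bound, using $a_0 \leq C a$ (from $a_0/a \leq C$) and $d \leq d_0$, one estimates
\begin{equation*}
\tilde c = c - \mu a_0 \;>\; -\frac{Ca}{d} - \mu C a \;\geq\; -\frac{C(1 + \mu d_0)\, a}{d},
\end{equation*}
so \eqref{L-coeff} holds for $\widetilde L$ with the new constant $C' := C(1 + \mu d_0)$, while the inequalities on $a_0, a, b$ are untouched.

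The second step is to transfer the extremum from $w$ to $v$. Since $e^{\mu t} > 0$, the differential inequality $Lw \leq 0$ is equivalent to $\widetilde L v \leq 0$, and similarly for $\geq 0$ in case (ii). In case (i), the hypothesis that $w$ attains its non-negative maximum equal to $0$ at the interior point $(x_0, t_0)$ forces $w \leq 0$, hence $v = e^{-\mu t} w \leq 0$ with $v(x_0, t_0) = 0$. Thus $v$ attains its non-negative maximum $0$ at the same interior point, and \cref{prop-strong}(i) applied to $v$ and $\widetilde L$ yields $v \equiv 0$, so $w \equiv 0 = w(x_0, t_0)$ on $(0,d_0)\times(0,T]$. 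Case (ii) is handled symmetrically: one inspects the non-positive minimum $0$ of $v$ and invokes \cref{prop-strong}(ii).

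I do not expect a substantive obstacle beyond the coefficient verification in the first step. The conceptual role of the hypothesis that the extremum equals exactly zero is that zero is the unique value invariant under the rescaling $w \mapsto e^{-\mu t} w$: for any nonzero extremum the time-dependent factor would shift the location of the extremum of $v$ relative to that of $w$, so $v$ would no longer attain its extremum at $(x_0, t_0)$ and the reduction would fail. This is the precise reason the relaxation of the sign condition on $c$ requires the extra restriction in the statement.
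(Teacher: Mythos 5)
Your proof is correct and is essentially the paper's own argument: the paper likewise sets $\phi:=e^{-Ct}w$, observes that the resulting operator has zeroth-order coefficient $c-Ca_0\leq 0$ still obeying \eqref{L-coeff} (with an enlarged constant), notes that an extremum equal to $0$ is preserved under the exponential factor, and invokes \cref{prop-strong}. Your coefficient verification and the remark on why the extremum must be exactly zero match the paper's reasoning.
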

\begin{proof}
	It suffices to prove (ii). Now we consider
		$$\phi:=e^{-Ct}w, \quad \tilde{L}\phi:=L\phi+Ca_0 \phi=e^{-Ct} L\omega \geq0 \quad \text{in~} (1-d_0,1)\times (0,T],$$
		Therefore, if $w$ attains its minimum $0$ at some interior point $(x_0,t_0)$ of $(1-d_0,1)\times (0,T]$, $\phi$ attains its minimum $0$ at the same interior point.
		In addition, thanks to the assumption that $ -\frac{Ca}{d} < c\leq Ca_0 $, $\tilde{L}$ satisfies the condition of coefficients \cref{L-coeff}. Therefore, it follows from \cref{prop-strong} that $\phi\equiv0$ in $(1-d_0,1)\times (0,T]$, and thus so does $w$.
\end{proof}

\section{Proof of Theorem \ref{thm-1}}


\subsection{Lagrangian formulation}
In the case of $ n = 1 $ and $\kappa=0$, we write $ \vu = u $ and $\Omega(t) = (\omega_1(t), \omega_2(t))$ with $\Gamma(t) = \lbrace y=\omega_1(t), y=\omega_2(t) \rbrace $. Then \eqref{kinematic} is reduced to 
\begin{equation}
	\label{kinematic-1}
	\omega_1'(t)=u(\omega_1(t),t), ~\omega_2'(t)=u(\omega_2(t),t), 
\end{equation}
and  the vacuum free boundary problem with \eqref{NS}, \eqref{boundary}, \eqref{kinematic}, \eqref{initial},  and \eqref{boundary-2} is reduced to
\begin{equation}\label{NS-1}
	\begin{cases}
		\rho_t+(\rho u)_y=0 & \text{in}\ \Omega(t), \\
		\rho u_t +\rho uu_y+p_y=(\nu u_y)_y  & \text{in}\ \Omega(t),\\
		c_v (\rho \theta_t+\rho u\theta_y)+pu_y=\nu (u_y)^2  & \text{in}\ \Omega(t),\\
		\rho>0,~\theta>0  & \text{in}\ \Omega(t),\\
		\rho=\theta=0, ~ \nu u_y= 0  &  \text{on} \ \Gamma(t),\\
		(\rho,u,\theta)=(\rho_0,u_0,\theta_0) &  \text{on} \ \Omega_0 = (0,1).
	\end{cases}
\end{equation}
Here $\nu=\bar{\nu}\theta^{\alpha},~\bar{\nu}=2\bar{\mu}+\bar{\lambda}>0$. Note that when $\alpha>0$, $\nu u_y\vert_{\Gamma(t)}=0$ thanks to $\theta\vert_{\Gamma(t)}=0$. 

For $ x \in \Omega_0 = (0,1) $, 
define the flow map $y=\eta(x,t)$ by the following ODE:
\begin{equation}\label{def-eta}
	\begin{cases}
		\eta_t(x,t)=u \circ \eta (x,t) = u(\eta(x,t),t), \\
		\eta(x,t=0)=\eta_0(x) = x.
	\end{cases}
\end{equation}
Notice that, $ \omega_1(t) = \eta(0,t) $ and $ \omega_2(t) = \eta(1,t) $ thanks to \eqref{kinematic-1}.
Then the Lagrangian unknowns are denoted as:
\begin{equation} \label{Lag-unknowns}
	f:=\rho \circ \eta, \quad
	v:=u\circ \eta, \quad
	\Theta:=\theta \circ \eta, \quad
	S := s \circ \eta.
\end{equation}
\eqref{state} implies that
\begin{equation}\label{entropy-0}
	S= c_v \bigl( \log ( \dfrac{\Theta}{f^{\gamma-1}}) + \log (\dfrac{R}{\bar A}) \bigr). 
\end{equation}
Consequently, system \eqref{NS-1} can be written as
\begin{equation}
	\label{NS-1-mid}
	\begin{cases}
		f_t+f \frac{v_x}{\eta_x}=0 & \text{in}\ (0,1)\times(0,T],\\
		f v_t+\frac{1}{\eta_x} (Rf\Theta)_x = \frac{1}{\eta_x} (\nu \frac{v_x}{\eta_x})_x & \text{in}\ (0,1)\times(0,T],\\
		c_v f\Theta_t +Rf\Theta \ \frac{v_x}{\eta_x}= \nu(\frac{v_x}{\eta_x})^2 
		&  \text{in}\ (0,1)\times(0,T],\\
		f>0,~ \Theta>0 & \text{in}\ (0,1)\times(0,T],\\
		f= \Theta = 0,\ \nu \frac{v_x}{\eta_x}=0, & \text{on} \  \{0,1\} \times (0,T],\\
		(f,v,\Theta)=(\rho_0,u_0,\theta_0) & \text{on}\ (0,1)\times\{t=0\}.
	\end{cases}
\end{equation}

Notice that $ v_x = \partial_t \eta_x $. $ f $ can be solved from \subeqref{NS-1-mid}{1} as 
\begin{equation}\label{solving-density}
	f = \rho_0/\eta_x.
\end{equation}
Moreover, we define 
\begin{equation} 
	\label{def-zeta}
	\zeta:=\frac{\bar{A}}{R}e^{S(\gamma-1)/R}=\rho_0^{-(\gamma-1) }\eta_x^{\gamma-1} \Theta,~ \zeta_0 = \frac{\theta_0}{\rho_0^{\gamma-1}},
\end{equation} 
and then an alternative form of \eqref{NS-1-mid} is, 
\begin{equation}\label{NS-1-Lag}
	\begin{cases}
		\rho_0 v_t+ (\frac{R\rho_0^{\gamma} \zeta}{\eta_x^{\gamma}})_x = (\nu \frac{v_x}{\eta_x})_x  &\text{in}\ (0,1)\times(0,T],\\
		c_v \rho_0^{\gamma}\zeta_t = \nu v_x^2 \eta_x^{\gamma-2} &\text{in}\ (0,1)\times(0,T],\\
		\nu v_x=0, ~ \rho_0 = 0  & \text{at}\ x=0,1,\\
		(v,\zeta)=(u_0,\zeta_0 )  & \text{on}\ (0,1)\times\{t=0 \}.
	\end{cases}
\end{equation}
Therefore, to prove Theorem \ref{thm-1}, it suffices to prove the following theorem in the Lagrangian coordinates:

\begin{theorem}\label{thm-1-Lag}
	When $\kappa=0$, the free boundary problem in the Lagrangian coordinates \eqref{NS-1-Lag} has no classical solution $(v,\zeta)$ satisfying 
	\begin{equation}  \label{S-assump-1-Lag}
		\left\{
		\begin{aligned}
			&v\in C^2_1((0,1) \times [0,T]) \cap C( [0,1] \times[0,T]) \\
			& \qquad \cap L^{\infty}(0,T; W^{2,\infty} (0,1) ) ,\\
			&v_t \in  L^{\infty} (0,T; L^\infty(0,1)),\\
			&\zeta\in C^{1,1}((0,1)\times [0,T]), ~\zeta, \frac{1}{\zeta}, \zeta_x\in L^{\infty}(0,T; L^{\infty}(0,1)) \\
			& \zeta_t\in L^2(0,T;L^{\infty}(0,1)),
		\end{aligned}
		\right.
	\end{equation}
	for any positive time $T$, 
	provided that \eqref{H1-1}, \eqref{H3} and \eqref{H2} hold.
\end{theorem}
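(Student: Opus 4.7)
The plan is a contradiction argument: suppose a classical solution $(v,\zeta)$ with the regularity \eqref{S-assump-1-Lag} exists on $[0,T]$, and use the degenerate parabolic maximum principle of Section \ref{sect-max} to derive an incompatibility with \eqref{H2} and \eqref{H3}. By symmetry it suffices to consider the second scenario in \eqref{H2}: $u_0(Y_0)>0$ for some $Y_0\in(1-d_0,1)$ and $u_0\ge 0$ on $(Y_0,1)$, so I focus on a parabolic neighborhood of $x=1$. Two preliminary observations I would record first: rewriting \subeqref{NS-1-Lag}{2} as
\[
c_v\zeta_t \;=\; \frac{\bar\nu\,\zeta^\alpha v_x^2\eta_x^{\gamma-2-(\gamma-1)\alpha}}{\rho_0^{\gamma-(\gamma-1)\alpha}}\;\ge\;0
\]
shows $\zeta$ is non-decreasing in $t$, hence $\zeta\ge\min\zeta_0=:\zeta_*>0$; and the assumed continuity of $v_x$ keeps $\eta_x\in[1/2,2]$ on some short interval $[0,T^*]$.

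The central step is to cast the momentum equation \subeqref{NS-1-Lag}{1} in the form
\[
(\rho_0\partial_t+\mathcal{L}_1)v \;=\; -\Bigl(\tfrac{R\rho_0^\gamma\zeta}{\eta_x^\gamma}\Bigr)_x,\qquad \mathcal{L}_1 v := -\tfrac{\nu}{\eta_x}v_{xx}-\bigl(\tfrac{\nu}{\eta_x}\bigr)_x v_x,
\]
and to verify that $\rho_0\partial_t+\mathcal{L}_1$ fits the framework \eqref{L-coeff} at the right endpoint. With $\rho_0\sim(1-x)^\delta$ and $\nu\sim\Theta^\alpha\sim(1-x)^{(\gamma-1)\delta\alpha}$, the ratio $a_0/a=\rho_0\eta_x/\nu$ is bounded exactly because $\alpha\le 1/(\gamma-1)$; the first-order coefficient $b/a=\partial_x\log(\nu/\eta_x)$ tends to $-\infty$ as $x\to 1$ and so is bounded above (the condition required at the right endpoint); and $c\equiv 0$. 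Expanding the right-hand side as $-R\gamma\rho_0^{\gamma-1}\rho_{0,x}\zeta/\eta_x^\gamma+\text{l.o.t.}$, the dominant term is strictly positive near $x=1$ since $\rho_{0,x}<0$, so $(\rho_0\partial_t+\mathcal{L}_1)v>0$ on a parabolic neighborhood of $x=1$.

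With this sign secured, I would combine the strong maximum principle \cref{prop-strong}(ii), Hopf's lemma \cref{prop-Hopf}(ii), and a tailored subsolution of the form $\underline v(x,t)=c_0 t\chi(x)$ (with $\chi$ a smooth cutoff equal to $1$ near $x=1$ and vanishing at $x=Y_0$) to force $v$ strictly positive up to the boundary: $v(x,t)\ge c_1>0$ on $[Y_1,1]\times[0,T^{**}]$ for some $Y_1\in(Y_0,1)$, $T^{**}>0$, $c_1>0$. The purpose of the upper bound $1/(\delta(\gamma-1))<1$ in \eqref{H3} is precisely to make this barrier work: the pressure source is $O((1-x)^{\gamma\delta-1})$ while $(\rho_0\partial_t+\mathcal{L}_1)\underline v=O(\rho_0 t)=O((1-x)^\delta)$, so the subsolution inequality $(\rho_0\partial_t+\mathcal{L}_1)\underline v\le$ (source) near $x=1$ requires $\gamma\delta-1\le\delta$, i.e.\ $\delta(\gamma-1)>1$.

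Finally, the contradiction follows from $u_0/d^{1/2}\in L^2(0,1)$. Propagated forward via a weighted energy estimate applied to \subeqref{NS-1-Lag}{1}---in which the viscous dissipation absorbs the $1/d$ weight thanks to the sharper restriction $1/(\delta(\gamma-1))<\tfrac12(\tfrac{\gamma-2}{\gamma-1}+\alpha)$ from \eqref{H3}---this integrability yields $v(\cdot,t)/d^{1/2}\in L^2(0,1)$ uniformly in $t\in[0,T^{**}]$, which is incompatible with the lower bound $v\ge c_1>0$ near $x=1$ (for then $v^2/d$ fails to integrate at the right endpoint). I expect the two main technical obstacles to be precisely the barrier construction at $x=1$ and the companion weighted $L^2$-propagation, both of which hinge on the sharp numerology encoded in \eqref{H3}.
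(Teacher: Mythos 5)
Your setup coincides with the paper's up to the decisive step: casting \subeqref{NS-1-Lag}{1} as $(\rho_0\partial_t+\sL_1)v=-(R\rho_0^{\gamma}\zeta/\eta_x^{\gamma})_x$, checking \eqref{L-coeff} (boundedness of $a_0/a$ from $\alpha\le 1/(\gamma-1)$, the one-sided bound on $b/a$), identifying the sign of the pressure term near the endpoint, and the weighted energy estimate giving $v/d^{1/2}\in L^\infty(0,T;L^2)$ are all as in the paper. The gap is the barrier. Near $x=1$ the subsolution inequality for $\underline v=c_0t\chi$ reads $c_0\rho_0\lesssim -(R\rho_0^{\gamma}\zeta/\eta_x^{\gamma})_x$, i.e. $c_0(1-x)^{\delta}\lesssim (1-x)^{\gamma\delta-1}$, which requires $\gamma\delta-1\le\delta$, that is $\delta(\gamma-1)\le 1$. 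This is the \emph{negation} of what \eqref{H3} provides: $\tfrac{1}{\delta(\gamma-1)}<1$ means $\delta(\gamma-1)>1$ (your ``i.e.\ $\delta(\gamma-1)>1$'' reverses the equivalence). So under the theorem's hypotheses the pressure source degenerates strictly faster than $\rho_0$, and $c_0t\chi$ is not a subsolution in any neighborhood of $x=1$. Nor can the target conclusion be reached another way: the estimate you invoke at the end ($v/d^{1/2}\in L^2$ together with continuity of $v$ up to the boundary) already forces $v(1,t)=0$, so no bound $v\ge c_1>0$ on $[Y_1,1]$ can hold, and the strong maximum principle alone gives positivity only on the open set, with no uniform lower bound as $x\to 1^-$.

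The contradiction must therefore be extracted one derivative higher, and for that you are missing an ingredient you never derive: from \subeqref{NS-1-Lag}{2} and $\zeta_t\in L^2(0,T;L^\infty)$ one gets $v_x=\mathcal{O}(\nu^{-1/2}\rho_0^{\gamma/2})=\mathcal{O}(d^{\delta(\gamma-\alpha(\gamma-1))/2})$, hence $v_x(1,t)=0$ for a.e.\ $t$ (this uses $\alpha<\gamma/(\gamma-1)$). The paper's route is then: $v(1,t)=0$ from the weighted estimate, $v>0$ on $(Y_0,1)\times(0,T']$ from \cref{prop-strong}, so $v$ attains its (non-positive) minimum $0$ at $x=1$, and Hopf's lemma (\cref{prop-Hopf}) yields $v_x(1,t)<0$ --- contradicting $v_x(1,t)=0$. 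You mention Hopf's lemma but never say what its output is set against; replacing your barrier-plus-integrability scheme with this clash at the level of $v_x$ is what closes the argument.
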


Throughout this section,  we use $C$ to represent the generic positive constant, and ``$f\sim g$'' to represent ``$\frac{1}{C}\abs{g}{} \leq \abs{f}{}\leq C \abs{g}{}$ '' for some constant $C>0$. ``$f=\mathcal{O}(g)$'' means $\abs{f}{}\leq C g$ for some constant $C>0$ as $x$ or $y\rightarrow0^+$ or $1^-$. 

\vline

\subsection{Boundary behaviour of $v_x $ and $v$}\label{sect-v}
In this subsection, we will prove
\begin{proposition}\label{prop-v}
	Suppose that \eqref{H1-1} and \eqref{H3} hold true, and that $(v,\zeta)$ is the classical solution to \eqref{NS-1-Lag} with \eqref{S-assump-1-Lag}. 
	Then $v_x\vert_{x=0,1}=v\vert_{x=0,1}=0 $ for a.e. $t\in(0,T]$.
\end{proposition}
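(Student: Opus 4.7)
The plan is to prove the two vanishings separately: first $v_x|_{x=0,1}=0$ via the temperature equation \subeqref{NS-1-Lag}{2}, then $v|_{x=0,1}=0$ by propagating the initial vanishing that is built into \eqref{H3}. For the vanishing of $v_x$, I would solve \subeqref{NS-1-Lag}{2}, namely $c_v\rho_0^\gamma\zeta_t=\nu v_x^2\eta_x^{\gamma-2}$, for $v_x^2$ and substitute $\nu=\bar\nu\Theta^\alpha$ with $\Theta=\zeta\rho_0^{\gamma-1}/\eta_x^{\gamma-1}$, obtaining for a.e.~$t\in(0,T]$ (for which $\zeta_t(\cdot,t)\in L^\infty$ by \eqref{S-assump-1-Lag}) the pointwise estimate
$$
v_x(x,t)^2\leq C(t)\,\rho_0(x)^{\gamma-(\gamma-1)\alpha},
$$
where $C(t)$ absorbs the uniform bounds on $\zeta,1/\zeta,\eta_x$. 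Since $0\leq\alpha\leq 1/(\gamma-1)$ and $\gamma>1$, the exponent $\gamma-(\gamma-1)\alpha\geq\gamma-1>0$; therefore $v_x\to 0$ as $\rho_0\to 0$, i.e.~as $x\to 0,1$, which gives $v_x|_{x=0,1}=0$ for a.e.~$t$.

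For the vanishing of $v$ on the boundary, the hypothesis $u_0/d^{1/2}\in L^2(0,1)$ in \eqref{H3} already forces $u_0(0)=u_0(1)=0$, since otherwise $\int_0^\varepsilon u_0^2/d\,dx=+\infty$. To propagate this to a.e.~$t>0$, I would first integrate \subeqref{NS-1-Lag}{1} in $x$ from the boundary, using $\nu v_x/\eta_x|_{x=0}=0$ (trivial for $\alpha>0$ since $\nu=0$ on the boundary, and given by Step~1 for $\alpha=0$), to obtain the balance
$$
\nu\frac{v_x}{\eta_x}=\int_0^x\rho_0 v_t\,dx'+\frac{R\rho_0^\gamma\zeta}{\eta_x^\gamma}.
$$
Combining $v_t\in L^\infty$, $\rho_0\sim d^\delta$, and $\delta(\gamma-1)>1$ (from \eqref{H3}, so that the $\rho_0^\gamma$ term is of higher order) yields the refined decay $|v_x|\leq Cd^{\delta+1-\delta(\gamma-1)\alpha}$ near the boundary. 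I would then derive a weighted energy inequality for $\int_0^1 v^2/d\,dx$ by testing the momentum equation against $v/d$: the decay of $v_x$ and $\nu$ would annihilate the boundary terms in the integration by parts, and the pressure cross term would be absorbed via Cauchy--Schwarz against the weighted dissipation $\int\nu v_x^2/(d\eta_x)\,dx$, producing a Gronwall inequality for $\int v^2/d\,dx$ bounded in terms of $\int u_0^2/d\,dx<\infty$. Since $v$ is continuous and $\int_0^\varepsilon v^2/x\,dx=+\infty$ whenever $v(0,t)\neq 0$, the resulting finite bound forces $v(0,t)=v(1,t)=0$ for a.e.~$t$.

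The main obstacle is closing the weighted energy estimate while simultaneously handling the degenerate dissipation $\nu\sim d^{\delta(\gamma-1)\alpha}$ at the boundary and the cross term arising from the pressure $R\rho_0^\gamma\zeta/\eta_x^\gamma$. I expect the constraint $1/(\delta(\gamma-1))<\tfrac{1}{2}((\gamma-2)/(\gamma-1)+\alpha)$ in \eqref{H3} to be sharp here: it should be precisely the condition ensuring that the product of the vanishing rates of $\rho_0^{2\gamma}$ and $1/\nu$, weighted by $1/d$, is integrable, so that the pressure--dissipation cross term in the weighted energy identity can be absorbed by the Cauchy--Schwarz step. The degenerate viscosity case ($\alpha>0$) is the most delicate, since the dissipation itself vanishes at the boundary and standard parabolic tools do not apply directly; one must track the exact interplay between the rates of $\rho_0$, $\nu$, and the gain in $v_x$ obtained from integrating the momentum equation.
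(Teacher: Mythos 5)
Your two-step strategy is the same as the paper's: the decay of $v_x$ read off from the temperature equation \subeqref{NS-1-Lag}{2} is exactly the paper's \eqref{decay-vx}, and the conclusion $v\vert_{x=0,1}=0$ is likewise obtained there from a Gronwall bound on $\int_0^1 v^2/d\,dx$. The gap is in how you propose to close that weighted estimate. The ``refined decay'' $|v_x|\le C d^{\delta+1-\delta\alpha(\gamma-1)}$ from integrating the momentum equation is a valid bound, but under \eqref{H3} it is \emph{coarser}, not finer, than your Step~1 bound: the inequality $\delta+1-\delta\alpha(\gamma-1)<\tfrac{\delta}{2}\bigl(\gamma-\alpha(\gamma-1)\bigr)$ is exactly equivalent to the first inequality in \eqref{H3}. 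More importantly, it is too weak to do the job you assign to it. Testing \subeqref{NS-1-Lag}{1} with $v/(\rho_0 d)$ and integrating the viscous term by parts produces the boundary term $-\frac{\nu v v_x}{\rho_0 d\eta_x}\vert_0^1$ and the cross term $\int_0^1(\frac{1}{\rho_0 d})_x\frac{\nu v v_x}{\eta_x}\,dx$. Your bound gives only $\nu v_x=\mathcal{O}(d^{\delta+1})$, hence the boundary term is merely $\mathcal{O}(|v(0,t)|)$ --- circular, since $v(0,t)=0$ is what is being proved --- and the cross term is only bounded by $C\int_0^1 |v|/d\,dx$, which cannot be absorbed into $\int_0^1 v^2/d\,dx$ (Cauchy--Schwarz leaves the divergent factor $\int_0^1 d^{-1}dx$). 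These two terms are precisely where the paper invokes the Step~1 decay \eqref{decay-vx}, and where the first inequality of \eqref{H3}, i.e.\ $\gamma+\alpha(\gamma-1)>2+2/\delta$, is actually consumed --- not, as you conjecture at the end, in a pressure--dissipation cross term.

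The repair is straightforward: drop the momentum-integration detour and use the temperature-equation decay $|v_x|=\mathcal{O}\bigl(d^{\delta(\gamma-\alpha(\gamma-1))/2}\bigr)$ throughout Step~2; then the boundary term vanishes and the cross term is integrable exactly under \eqref{H3}. Two smaller remarks: the pressure term needs no pairing with the dissipation --- the paper simply expands $(\frac{R\rho_0^\gamma\zeta}{\eta_x^\gamma})_x$, bounds the worst contribution by $C(\|\zeta\|_{L^\infty}+\|\zeta_x\|_{L^\infty})\,|v|\,d^{\delta(\gamma-1)-2}$, and absorbs it into $\int_0^1 v^2/d\,dx$ using only $\delta(\gamma-1)>1$, which is the role of the second constraint in \eqref{H3}. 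Your observations that $u_0/d^{1/2}\in L^2$ seeds the Gronwall argument and that continuity of $v$ upgrades $\int_0^1 v^2/d\,dx<\infty$ to $v\vert_{x=0,1}=0$ agree with the paper.
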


\begin{proof} Without loss of generality, 
	thanks to the fact that $v\in  L^{\infty}(0,T; W^{2,\infty} (0,1) )$ and $(\eta_0)_x=1$, we consider $T > 0 $ small enough such that 
	\begin{equation} \label{eta-bdd}
		\frac{1}{2}<\eta_x<2\quad \forall~(x,t)\in[0,1]\times[0,T].
	\end{equation}
	Then  \subeqref{NS-1-Lag}{2}, \eqref{S-assump-1-Lag}, and \eqref{eta-bdd} imply that
	$ \zeta_t \sim \nu \rho_0^{-\gamma} v_x^2  \in L^{2}([0,T], L^\infty(0,1))$. Therefore, 
	\begin{equation} \label{decay-vx}
		v_x=\mathcal{O}(\nu^{-\frac{1}{2}}\rho_0^{\frac{\gamma}{2}})=\mathcal{O}(\zeta^{-\frac{\alpha}{2}}\rho_0^{\frac{\gamma-\alpha(\gamma-1)}{2}})=\mathcal{O}(d^{\delta(\gamma-\alpha(\gamma-1))/2} ),~~\text{a.e.}~t\in[0,T],
	\end{equation}
	which yields $v_x(0,t)=v_x(1,t)=0$ provided that $\alpha<\gamma/(\gamma-1)$.

	On the other hand,
	to show $v(0,t)=v(1,t)=0$, it suffices to show that \emph{$\int_0^1\frac{v^2}{d} dx <\infty$ holds for a.e. $t\in[0,T]$}.
	Multiplying \subeqref{NS-1-Lag}{1} by $\rho_0^{-1} d^{-1}  v$ and integrating the resultant equation on $[0,1]$ yields 
	\begin{equation}
		\label{eq-v}
		\frac{1}{2} \frac{d}{dt}\int_0^1  \frac{ v^2}{d } dx 
		+ \int_0^1 \frac{v}{\rho_0 d} (\frac{R \rho_0^{\gamma} \zeta }{\eta_x^{\gamma}} )_x dx
		-\int_0^1\frac{ v}{\rho_0d } ( \nu\frac{v_x}{\eta_x} )_x dx=0.
	\end{equation}
	We estimates the last two integrals of the above equation one by one. First, direct calculation yields 
	\begin{align*}
		\abs{\int_0^1 \frac{v}{\rho_0 d} (\frac{R \rho_0^{\gamma} \zeta }{\eta_x^{\gamma}} )_x dx}{} 
		&\leq C\int_0^1 \left(
		\abs{\frac{\rho_0^{\gamma-1}\zeta_x v}{d}}{} 
		+ \abs{ \frac{\rho_0^{\gamma-2}\rho_0'\zeta v}{d}}{}
		+ \abs{\frac{\rho_0^{\gamma-1}\zeta v}{d} }{}
		\right)dy\\
		&\leq C\int_0^1 \abs{d^{\delta(\gamma-1)-2} v}{} dx \times ( \Vert \zeta_x\Vert_{L^\infty} + \Vert \zeta \Vert_{L^\infty}  ) \\
		&\leq C \int_0^1 \frac{v^2}{d} dx +  C\int_0^1d^{2\delta(\gamma-1)-3} dx \times ( \Vert \zeta_x\Vert_{L^\infty}^2 + \Vert \zeta \Vert_{L^\infty}^2  ),
	\end{align*}
	where we have applied the fact that
	\begin{equation}\label{bdd-001}
		\eta_{xx}\in L^\infty ((0,1)\times[0,T])
	\end{equation} 
	thanks to \eqref{def-eta} and \eqref{S-assump-1-Lag}. Notice that $ \Vert \zeta_x\Vert_{L^\infty}^2 + \Vert \zeta \Vert_{L^\infty}^2 $ is integrable in time thanks to \eqref{S-assump-1-Lag}.
	The above integral is integrable if 
	$$
	2\delta(\gamma-1)-3>-1, \quad \text{or equivalently} \quad \delta(\gamma-1)>1.
	$$
	Meanwhile, after applying integration by parts, one has
	\begin{equation*}
		-\int_0^1\frac{ v}{\rho_0d } ( \nu\frac{v_x}{\eta_x} )_x dx=
		\underbrace{-\frac{\nu v v_x}{\rho_0 d \eta_x} \Big\vert_{0}^{1}}_{\textbf{I}}
		+ \underbrace{\int_0^1 \frac{\nu v_x^2}{\rho_0 d \eta_x } dx}_{\textbf{II}}
		+ \underbrace{\int_0^1 (\frac{1}{ \rho_0 d } )_x \frac{\nu vv_x}{\eta_x}  dx}_{\textbf{III}}, 	
	\end{equation*}
	where
	\begin{align*} 
		&\abs{\textbf{I}}{}\leq C \lim_{x\rightarrow 0,1}d^{\delta (\frac{\gamma+\alpha (\gamma-1) }{2}-1)-1} 
		\quad  \text{thanks to \eqref{decay-vx}}, \quad \textbf{II} \geq0, \\
		&\abs{\textbf{III}}{} \leq C\int_0^1 \abs{d^{\delta (\frac{\gamma+\alpha (\gamma-1) }{2}-1)-2} v}{} dy \quad  \text{thanks to \eqref{H1-1}  and \eqref{decay-vx}}\\
		& \quad \quad 
		\leq  \int_0^1 \frac{v^2}{d} dy +  C\int_0^1 d^{\delta ((\gamma+\alpha (\gamma-1)) -2)-3}  dy.
	\end{align*}
	Consequently, $\textbf{I}=0$ and \textbf{III} is integrable provided that
	$$
	\begin{cases}
		\delta (\frac{\gamma+\alpha (\gamma-1) }{2}-1)-1>0,\\
		\delta ((\gamma+\alpha (\gamma-1)) -2)-3>-1,
	\end{cases}
	~ \text{or, equivalently} \quad  \gamma+\alpha (\gamma-1)>2+\frac{2}{\delta}.
	$$
	As a consequence, with \eqref{H3}, \eqref{eq-v} implies that
	$$	\frac{d}{dt}\int_0^1  \frac{ v^2}{d } dx \leq C\int_0^1  \frac{ v^2}{d } dx +C ( \Vert \zeta_x\Vert_{L^\infty}^2 + \Vert \zeta \Vert_{L^\infty}^2  ),$$
	and it follows from Gronwall's inequality that $ v d^{-1/2}\in L^\infty([0,T], L^{2}(0,1))$, which concludes that $v\vert_{x=0,1}=0$ for a.e. $t\in[0,T]$. Proposition \ref{prop-v} is proved.
\end{proof}

\subsection{Proof of Theorem \ref{thm-1-Lag} by contradiction}\label{sect-con}
\begin{proof}[Proof of Theorem \ref{thm-1-Lag}]
	Suppose that there exists a classical solution $(v,\zeta)$ to \eqref{NS-1-Lag} with \eqref{S-assump-1-Lag}, \eqref{H1-1}, \eqref{H3}, and \eqref{H2} for some time $T > 0$. Without loss of generality, we only consider in \eqref{H2} that $u_0(Y_0)<0$ and $u_0(y)\leq 0, ~\forall~0<y<Y_0$. The other scenario follows similarly. 
	
	Thanks to $v\in C^2_1((0,1) \times [0,T]) \cap C( [0,1] \times[0,T])$,	it follows that  $v(Y_0,t)<0$ for $t\in[0,T']$ with some small $0 < T' < T$.
	At the same time, \eqref{H1-1}, \eqref{S-assump-1-Lag}, and \eqref{bdd-001} imply that
	$$ -(\frac{R\rho_0^{\gamma} \zeta}{\eta_x^{\gamma}})_x = -\frac{R\rho_0^{\gamma} \zeta }{\eta_x^{\gamma}} 
	(\frac{\gamma(\rho_0)_x}{\rho_0}+\frac{\zeta_x}{\zeta}-\frac{\gamma \eta_{xx}}{\eta_x}) = - d^{\gamma\delta}\mathcal{O}(\frac{1}{d}) <0  \quad \text{in}~(0,d_0)\times (0,T'], $$ 
	for small enough $d_0$ depending on $ C_1 $ and $C_2 $ in \eqref{H1-1}.
	Thus, \subeqref{NS-1-Lag}{1} implies that $v$ satisfies the differential inequality
	\begin{equation} \label{sign-1}
		\begin{aligned}
			(\rho_0 \partial_t +\sL_1)v:=& (\rho_0 \partial_t - \frac{\nu}{\eta_x} \partial_{x}^2 -(\frac{\nu}{\eta_x})_x \partial_{x})v \\
			=& -(\frac{R\rho_0^{\gamma} \zeta}{\eta_x^{\gamma}})_x < 0 \quad \text{in}~(0,d_0)\times (0,T'].
		\end{aligned}
	\end{equation}
    $\rho_0 \partial_t +\sL_1$ satisfies \eqref{L-coeff} for $0\leq \alpha(\gamma-1)\leq1$ with
    \begin{equation}\label{L1-coeff}
    	\begin{aligned}
    		& a_0=\rho_0 ,~ a=\frac{\nu }{\eta_x},~b=(\frac{\nu}{\eta_x})_x,~c\equiv0,\\
    		&\frac{a_0}{a}=\mathcal{O}(\rho_0^{1-\alpha(\gamma-1)}) <C,\\
    		& \frac{b}{a}=\frac{\eta_x}{\nu} (\frac{\nu}{\eta_x})_x = \frac{ \eta_x^{\alpha (\gamma-1)+1} }{\rho_0^{\alpha (\gamma-1) } \zeta^{\alpha} } ( \frac{ \rho_0^{\alpha (\gamma-1) } \zeta^{\alpha}  }{\eta_x^{\alpha (\gamma-1)+1}} )_x \\ 
    		& \quad =  \Big( \frac{(\rho_0^{\alpha (\gamma-1) } )_x}{\rho_0^{\alpha (\gamma-1) } }  +   \frac{ \eta_x^{\alpha (\gamma-1)+1} }{ \zeta^{\alpha} }(\frac{  \zeta^{\alpha}  }{\eta_x^{\alpha (\gamma-1)+1}} )_x\Big),\\
    		&\quad >\mathcal{O}(\frac{1}{d})-C \quad \text{in} ~ (0,d_0)\times [0,T'] \\ 
    		(\text{or}	&  \quad < -\mathcal{O}(\frac{1}{d})+C \quad \text{in} ~ (1-d_0,1)\times [0,T'], ~ \text{respectively}),
    	\end{aligned}
    \end{equation}
    thanks to \eqref{H1-1} and \eqref{S-assump-1-Lag}.
    We only need to consider the case when $b$ is continuous; otherwise, since $ (\frac{\nu}{\eta_x})_x  \sim d^{\delta\alpha(\gamma-1)-1}$, we consider the operator $d^{1-\delta\alpha(\gamma-1)} (\rho_0 \partial_t + \sL_1 )$, i.e., setting 
    \begin{equation}
    	\label{L1-coff-re}
    	a_0=\rho_0 d^{1-\delta\alpha(\gamma-1)},~ a=\frac{\nu }{\eta_x}d^{1-\delta\alpha(\gamma-1)},~b=(\frac{\nu}{\eta_x})_xd^{1-\delta\alpha(\gamma-1)},~c\equiv0.
    \end{equation}  
    Therefore, \eqref{L-coeff} is satisfied for $\rho_0 \partial_t + \sL_1$ provided that $ d_0 $ is small enough. 

	Thanks to $C^2_1((0,1) \times [0,T]) \cap C( [0,1] \times[0,T])$, \eqref{H2}, \eqref{sign-1}, and $v(0,t)=0$ from Proposition \ref{prop-v}, it follows from the strong maximum principle Proposition \ref{prop-strong} that  $v<0$ in $(0,Y_0)\times[0,T']$.
	However, it follows from Hopf's lemma, i.e., Proposition, \ref{prop-Hopf} that $v_x(0,t)<0$ for $t\in[0,T']$, which contradicts to Proposition \ref{prop-v}. 
	Consequently, Theorem \ref{thm-1-Lag} is proved.
\end{proof}

\section{Proof of Theorem \ref{thm-3}} \label{sect-proof-thm-3}

\subsection{Lagrangian formulation} In the case of $ n = 3 $ with spherical symmetry, we write $r=\abs{y}{}$, $\vu(y)=u(r)\frac{y}{r}$,  $\Omega(t) = B((0,0,0),\bar{r}(t))$,  and $\Gamma(t) = \lbrace r=\bar{r}(t) \rbrace $. Then \eqref{kinematic} is reduced to 
\begin{equation}
	\label{kinematic-3}
	\bar{r}'(t)=u(\bar{r},t),
\end{equation}
and the vacuum free boundary problem with \eqref{NS}, \eqref{boundary}, \eqref{kinematic}, \eqref{initial},  and \eqref{boundary-2} is reduced to
\begin{equation}\label{NS-3}
	\begin{cases}
		(r^2 \rho)_t+(r^2 \rho u)_r=0 & \text{for}\ 0 <r<\bar{r}(t), \\
		(r^2 \rho u)_t+(r^2 \rho u^2)_r+r^2 p_r= (2\mu +\lambda) r^2 \big( \frac{(r^2u)_r}{r^2}\big)_r \\
		\qquad\qquad + r^2(2\mu_r u_r+\lambda_r \frac{(r^2u)_r}{r^2}) & \text{for}\ 0 < r<\bar{r}(t),\\
		c_v \big( (r^2 \rho \theta)_t + (r^2\rho u\theta )_r  \big) + R\rho\theta (r^2u)_r =\frac{4}{3} \mu r^2 (u_r -\frac{u}{r})^2 &\\
		\qquad\qquad +(\frac{2}{3}\mu +\lambda) r^2 (u_r +2\frac{u}{r})^2+ \kappa ( r^2\theta_r )_r   & \text{for}\ 0 <r<\bar{r}(t),\\
		\rho>0,~\theta>0  & \text{for}\ r<\bar{r}(t),\\
		\rho=\theta=0,~~ 2\mu u_r  +\lambda (u_r +2\frac{u}{r})=0  & \text{for}\ r=\bar{r}(t),\\
		(\rho,u,\theta)=(\rho_0,u_0,\theta_0) &  \text{on} \ \Omega_0,
	\end{cases}
\end{equation}

Similarly as in the one-dimensional case, we define the flow map $r=\eta(x,t)$ and the Lagrangian unknowns as in \eqref{def-eta} and \eqref{Lag-unknowns}, respectively. Note that now
$f=x^2 \rho_0 / (\eta^2 \eta_x).$

When $k=0$,  we define $\zeta$ by 
\begin{equation} 
	\label{def-zeta*}
	\zeta:=\frac{\bar{A}}{R}e^{S(\gamma-1)/R}= (\frac{x^2\rho_0}{\eta^2\eta_x}  )^{-(\gamma-1)} \Theta,~\text{with~}  \zeta \vert_{t=0}\zeta_0 = \frac{\theta_0}{\rho_0^{\gamma-1}}.
\end{equation} 
In the Lagrangian coordinates, system \eqref{NS-3} can be written as 
\begin{equation}\label{NS-3-Lag}
	\begin{cases}
		\frac{x^2\rho_0}{\eta^2} v_t+ (\frac{Rx^{2\gamma} \rho_0^{\gamma} \zeta}{\eta^{2\gamma} \eta_x^{\gamma} })_x =(2\mu+\lambda)(\frac{v_x}{\eta_x}+2\frac{v}{\eta})_x + (2\mu_x\frac{ v_x}{\eta_x} + \lambda_x (\frac{v_x}{\eta_x}+2\frac{v}{\eta})  )   &\text{in}\ [0,1)\times(0,T],\\
		c_v \frac{x^{2\gamma} \rho_0^{\gamma} }{\eta^{2\gamma} \eta_x^{\gamma} } \zeta_t= \frac{4}{3} \mu (\frac{v_x}{\eta_x} -\frac{v}{\eta})^2 + (\frac{2}{3}\mu +\lambda)  (\frac{v_x}{\eta_x} +2\frac{v}{\eta} )^2 &\text{in}\ [0,1)\times(0,T],\\
		2 \mu \frac{v_x}{\eta_x} + \lambda (\frac{v_x}{\eta_x} +2\frac{v}{\eta})=0 & \text{at}\ x=1,\\
		(v,\zeta)=(u_0,\zeta_0)  & \text{on}\ [0,1)\times\{t=0 \}.
	\end{cases}
\end{equation}

To prove Theorem \ref{thm-3}, it suffices to prove the following theorem in Lagrangian coordinates:
\begin{theorem}\label{thm-3-Lag}
	When $2\bar{\mu}+3\bar{\lambda}>0$, $\kappa=0$, $0\leq \alpha\leq\frac{1}{\gamma-1},$ the free boundary problem in Lagrangian coordinates \eqref{NS-3-Lag} has no solution $(v,\zeta)$ for any positive time $T$ satisfying,
	\begin{equation}
		\label{S-assump-3-Lag}
		\left\{
		\begin{aligned} 
			&v\in C^2_1([0,1) \times [0,T]) \cap C( [0,1] \times[0,T]) \\
			&\qquad\cap L^{\infty}(0,T; W^{2,\infty} (0,1) ), \\
			&v_t \in L^{\infty} (0,T; L^\infty(0,1)),\\
			&\zeta\in C^{1,1} ([0,1)\times[0,T]), ~  \zeta, \frac{1}{\zeta}, \zeta_x\in L^{\infty}(0,T; L^{\infty}(0,1))\\
			&\zeta_t\in L^2(0,T;L^{\infty}(0,1)),
		\end{aligned}
		\right.
	\end{equation}
	provided that initially \eqref{H1-1} and \eqref{H2*} hold.
\end{theorem}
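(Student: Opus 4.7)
The proof of Theorem~\ref{thm-3-Lag} mirrors the contradiction scheme of Theorem~\ref{thm-1-Lag} in Section~\ref{sect-con}. Assume toward contradiction that a classical solution $(v,\zeta)$ with the regularity \eqref{S-assump-3-Lag} exists on $[0,T]$; since $v_x\in L^\infty$ and $\eta_x\vert_{t=0}=1$, fix a short $T'\leq T$ on which $1/2<\eta_x<2$. The first task is to extract the boundary traces $v(1,t)=v_x(1,t)=0$ directly from the temperature equation \subeqref{NS-3-Lag}{2}. Setting $A:=v_x/\eta_x-v/\eta$ and $B:=v_x/\eta_x+2v/\eta$, \subeqref{NS-3-Lag}{2} reads
\begin{equation*}
\tfrac{4}{3}\mu A^{2}+(\tfrac{2}{3}\mu+\lambda)B^{2}=c_v\,\tfrac{x^{2\gamma}\rho_{0}^{\gamma}}{\eta^{2\gamma}\eta_{x}^{\gamma}}\,\zeta_{t}.
\end{equation*}
Since $\bar\mu>0$ and $2\bar\mu+3\bar\lambda>0$ (so $\frac{2}{3}\bar\mu+\bar\lambda>0$), and $\mu,\frac{2}{3}\mu+\lambda\sim\theta^{\alpha}\sim d^{\delta\alpha(\gamma-1)}$ near $x=1$, one obtains the pointwise estimate $A^{2}+B^{2}\leq C d^{\delta(\gamma-\alpha(\gamma-1))}\|\zeta_t(\cdot,t)\|_{L^{\infty}}$ for a.e.\ $t$. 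Since $\alpha\leq 1/(\gamma-1)$ keeps the exponent strictly positive, $A,B\to 0$ as $x\to 1$; solving the linear system for $v_x/\eta_x$ and $v/\eta$ gives $v(1,t)=v_x(1,t)=0$ for a.e.\ $t\in(0,T']$, and continuity of $v$ promotes $v(1,\cdot)\equiv 0$ on $[0,T']$.

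Next, dividing the momentum equation \subeqref{NS-3-Lag}{1} by $x^{2}/\eta^{2}$ one writes it in the form
\begin{equation*}
(\rho_{0}\partial_{t}+\mathcal{L}_{3})v=-\tfrac{\eta^{2}}{x^{2}}\Bigl(\tfrac{Rx^{2\gamma}\rho_{0}^{\gamma}\zeta}{\eta^{2\gamma}\eta_{x}^{\gamma}}\Bigr)_{x},
\end{equation*}
with $\mathcal{L}_{3}$ the elliptic operator arising from the viscous fluxes. Near $x=1$, the leading contribution to the right-hand side comes from $\gamma\rho_{0}^{\gamma-1}(\rho_{0})_{x}\zeta$, which by \eqref{H1-1} is of size $-C d^{\gamma\delta-1}<0$; all remaining factors ($\zeta,\zeta_x,\eta,\eta_x,\eta_{xx},x$) are bounded or subordinate. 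Hence $(\rho_{0}\partial_{t}+\mathcal{L}_{3})v>0$ on $(1-d_{0},1)\times(0,T']$ after possibly shrinking $d_0$ and $T'$. Since $r_{0}\in(1-d_{0},1)$ by \eqref{H2*}, restrict to $D=(r_{0},1)\times(0,T']$, whose parabolic-boundary data satisfy $v|_{t=0}\geq 0$, $v(r_{0},t)>0$ (by continuity from $v_{0}(r_{0})>0$), and $v(1,t)=0$. The weak minimum principle Lemma~\ref{lem-weak}\ref{w-min-prcp-2} yields $v\geq 0$ on $D$; the strong maximum principle Proposition~\ref{prop-strong}(ii) upgrades this to $v>0$ strictly in the interior of $D$, for otherwise $v\equiv 0$ would contradict $v_{0}(r_{0})>0$. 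Hopf's lemma Proposition~\ref{prop-Hopf}\ref{hopf-2} then gives $v_{x}(1,t_{0})<0$ at any $t_{0}\in(0,T']$ for which $v_{x}(1,t_{0})=0$ already holds from the trace analysis, producing the contradiction.

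\textbf{Main obstacle.} The principal technical difficulty is verifying that $\rho_{0}\partial_{t}+\mathcal{L}_{3}$ meets the coefficient hypotheses \eqref{L-coeff} (or their relaxation in Corollary~\ref{cor-strong}) in the temperature-dependent case. A direct expansion gives leading coefficient $a\sim(2\mu+\lambda)/\eta_{x}\sim d^{\delta\alpha(\gamma-1)}$ and temporal weight $a_{0}=\rho_{0}\sim d^{\delta}$, so $a_{0}/a\sim d^{\delta(1-\alpha(\gamma-1))}$ is bounded precisely because $\alpha(\gamma-1)\leq 1$. The spherical geometry produces a zeroth-order coefficient $c=-2(2\mu+\lambda)\eta_{x}/\eta^{2}+2\lambda_{x}/\eta$, in which $\lambda_{x}\sim\theta^{\alpha-1}\theta_{x}\sim d^{\delta\alpha(\gamma-1)-1}$; since $\theta_{x}<0$ near $x=1$, both contributions to $c$ are nonpositive and the sharpest piece is of order $a/d$, so $-Ca/d<c\leq 0$ as required. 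The first-order coefficient $b$ contains a singular piece of the same order $d^{\delta\alpha(\gamma-1)-1}$ and is made continuous by the rescaling $d^{1-\delta\alpha(\gamma-1)}(\rho_{0}\partial_{t}+\mathcal{L}_{3})$ employed in \eqref{L1-coff-re}; this rescaling preserves the positivity of $a$ and the sign of the right-hand side, so the framework of Section~\ref{sect-max} applies and the contradiction closes.
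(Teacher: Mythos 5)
Your proposal is correct and follows essentially the same route as the paper: extract $v(1,t)=v_x(1,t)=0$ from the entropy equation and the $L^2_tL^\infty_x$ bound on $\zeta_t$ (the paper's Proposition \ref{prop-v-3}), turn the momentum equation into the differential inequality $(\frac{x^2\rho_0}{\eta^2}\partial_t+\sL_3)v>0$ near $x=1$ using \eqref{H1-1}, verify \eqref{L-coeff} with the rescaling \eqref{L1-coff-re} when $b$ is singular, and conclude via the strong maximum principle and Hopf's lemma that $v_x(1,t)<0$, a contradiction. The only cosmetic difference is that you divide by $x^2/\eta^2$ before applying the maximum principle, which changes nothing near the boundary.
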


Similarly as before, we use ``$f\sim g$'' to represent ``$\frac{1}{C}\abs{g}{} \leq \abs{f}{}\leq C \abs{g}{}$ '' for some constant $C>0$. ``$f=\mathcal{O}(g)$'' means $\abs{f}{}\leq C g$ as $x \rightarrow 1^-$. 

\vline

\subsection{Boundary behaviors of $v_x,v$}\label{sect-v-3}
In this subsection, we will prove
\begin{proposition}\label{prop-v-3}
	Suppose that $(v,\zeta)$ is the classical solution to \eqref{NS-1-Lag} satisfying \eqref{S-assump-3-Lag} and \eqref{H1-1}. 
	Then $v_x(1,t)=v(1,t)=0$ for a.e. $t\in[0,T]$.
\end{proposition}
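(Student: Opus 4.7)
The plan is to mirror the strategy of \cref{prop-v} but exploit a key simplification available in three dimensions: the temperature equation \subeqref{NS-3-Lag}{2} contains \emph{two} independent dissipation squares, namely $D_1^2$ and $D_2^2$ with $D_1 := v_x/\eta_x - v/\eta$ and $D_2 := v_x/\eta_x + 2v/\eta$, rather than the single $v_x^2$ available in one dimension. Together with the sign conditions $\bar\mu>0$ and $2\bar\mu+3\bar\lambda>0$, these will directly control both $v_x/\eta_x$ and $v/\eta$ near $x=1$, bypassing the delicate weighted integral estimate with $\int_0^1 v^2/d\,dx$ that was needed in \cref{prop-v}.

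First, as in \cref{sect-v}, by shrinking $T>0$ if necessary and using $v\in L^\infty(0,T;W^{2,\infty}(0,1))$, I would verify $\eta_x \sim 1$ on $[0,1]\times[0,T]$; continuity of $\eta(1,\cdot)$ together with $\eta(1,0)=1$ then gives $\eta(1,t)\geq c_0 > 0$ on the same interval. The linear system for $(D_1,D_2)$ inverts as $v_x/\eta_x = (2D_1+D_2)/3$ and $v/\eta = (D_2-D_1)/3$. From \eqref{def-zeta*} and \eqref{coefficients}, I obtain $\mu = \bar\mu\,\zeta^\alpha (x^2\rho_0/(\eta^2\eta_x))^{\alpha(\gamma-1)} \sim \rho_0^{\alpha(\gamma-1)}$ near $x=1$, uniformly in $t$, using the bounds on $\zeta,\,1/\zeta$ from \eqref{S-assump-3-Lag}; similarly $(2\mu+3\lambda)/3 \sim \rho_0^{\alpha(\gamma-1)}$, with both coefficients strictly positive thanks to $\bar\mu>0$ and $2\bar\mu+3\bar\lambda>0$.

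Second, I rewrite \subeqref{NS-3-Lag}{2} as
\begin{equation*}
\tfrac{4}{3}\mu\, D_1^2 + \tfrac{1}{3}(2\mu+3\lambda)\, D_2^2 = c_v\,\frac{x^{2\gamma}\rho_0^\gamma}{\eta^{2\gamma}\eta_x^\gamma}\,\zeta_t.
\end{equation*}
Using $\zeta_t \in L^2(0,T;L^\infty(0,1))$ from \eqref{S-assump-3-Lag}, for a.e.\ $t\in[0,T]$ the right-hand side is bounded pointwise in $x$ by $C\,\rho_0(x)^\gamma\,\|\zeta_t(\cdot,t)\|_{L^\infty(0,1)}$; dividing through by the coefficient $\sim \rho_0^{\alpha(\gamma-1)}$ yields
\begin{equation*}
D_1(x,t)^2 + D_2(x,t)^2 \leq C\,\rho_0(x)^{\gamma-\alpha(\gamma-1)}\,\|\zeta_t(\cdot,t)\|_{L^\infty(0,1)}.
\end{equation*}
Since $0\leq\alpha\leq 1/(\gamma-1)$ forces $\gamma-\alpha(\gamma-1) \geq \gamma-1 > 0$ and $\rho_0(1)=0$ by \eqref{H1-1}, the right-hand side vanishes as $x\to 1^-$. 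Therefore $D_1(1,t)=D_2(1,t)=0$ for a.e.\ $t$, and combining with $\eta_x(1,t)\sim 1$ and $\eta(1,t)\geq c_0>0$ gives $v_x(1,t) = v(1,t) = 0$.

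There is no serious obstacle here beyond bookkeeping; the sign assumption $2\bar\mu+3\bar\lambda>0$ is precisely what is needed to make the coefficients of both $D_1^2$ and $D_2^2$ strictly positive, so that $v_x$ and $v$ can be recovered separately from the two dissipation squares. The only subtle point is the ``a.e.\ $t$'' qualifier, inherited from $\zeta_t$ being merely $L^2$ in time, which is exactly what the conclusion asserts.
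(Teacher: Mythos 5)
Your proposal is correct and follows essentially the same route as the paper: both read off from the entropy equation \subeqref{NS-3-Lag}{2} and $\zeta_t\in L^2(0,T;L^\infty)$ that $\frac43\mu D_1^2+(\frac23\mu+\lambda)D_2^2=\mathcal{O}(\rho_0^\gamma)$, use $\bar\mu>0$, $2\bar\mu+3\bar\lambda>0$ and $\mu,\lambda\sim\rho_0^{\alpha(\gamma-1)}$ to get $D_1,D_2=\mathcal{O}(\rho_0^{(\gamma-\alpha(\gamma-1))/2})\to0$ at $x=1$, and then invert to recover $v_x$ and $v$ separately. Your observation that the two independent dissipation squares make the weighted $\int v^2/d$ estimate of \cref{prop-v} unnecessary is exactly the simplification the paper exploits.
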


\begin{proof}
	Since $v\in  L^{\infty}(0,T; W^{2,\infty} (0,1) ) $ and $(\eta_0)_x=1$, then for small enough $T$, we have 
	\begin{equation} \label{eta-bdd-3}
		\frac{1}{2}<\eta_x<2\quad \forall~(x,t)\in[0,1]\times[0,T].
	\end{equation}
	It follows from the above, $\zeta_t\in L^{2}([0,T], L^\infty[0,1) )$ (from \eqref{S-assump-1-Lag}) and \subeqref{NS-3-Lag}{2} that 
	$$ \rho_0^{-\gamma} \{ \frac{4}{3} \mu (\frac{v_x}{\eta_x} -\frac{v}{\eta})^2 + (\frac{2}{3}\mu +\lambda)  (\frac{v_x}{\eta_x} +2\frac{v}{\eta} )^2\} \in L^{2}([0,T], L^\infty[0,1)). $$
	Since $\mu>0, 2\mu+3\lambda>0$,
	\begin{equation} \label{decay-vx-3}
		\frac{v_x}{\eta_x} -\frac{v}{\eta}, \frac{v_x}{\eta_x} +2\frac{v}{\eta}  =\mathcal{O}(\zeta^{-\frac{\alpha}{2}}\rho_0^{\frac{\gamma-\alpha(\gamma-1)}{2}})=\mathcal{O}(d^{\delta(\gamma-\alpha(\gamma-1))/2} ),~~\text{a.e.}~t\in[0,T],
	\end{equation}
	Therefore, $v_x(1,t)=v(1,t)=0$ provided that $\alpha<\gamma/(\gamma-1)$.
\end{proof}

\subsection{Proof of Theorem \ref{thm-3-Lag} by contradiction}\label{sect-con-3}
\begin{proof}[Proof of Theorem \ref{thm-3-Lag}]
	Suppose that there exists a classical solution $(v,\zeta)$ to \eqref{NS-3-Lag} with \eqref{S-assump-3-Lag}, \eqref{H1-1}, and \eqref{H2*} for some time $T$.
	
	The fact that $v\in C^2_1([0,1) \times [0,T]) \cap C( [0,1] \times[0,T])$ implies that, thanks to \eqref{H2*}, $v_0(r)>0$ for $\abs{y}{}=r_0$ for $t\in [0,T']$ with some small $0<T'<T$. In particular, we still have \eqref{bdd-001} with the spatial domain replaced by $ (1-d_0,1) $. 
	At the same time, \eqref{H1-1}, \eqref{bdd-001}, and \eqref{S-assump-3-Lag} imply that, for a.e. $ t \in (0,T) $,
	$$ -(\frac{Rx^{2\gamma} \rho_0^{\gamma} \zeta}{\eta^{2\gamma} \eta_x^{\gamma} })_x = -\frac{Rx^{2\gamma} \rho_0^{\gamma} \zeta}{\eta^{2\gamma} \eta_x^{\gamma} } (\frac{\gamma(\rho_0)_x}{\rho_0}+\frac{\zeta_x}{\zeta}-\gamma (\frac{x^2}{\eta^2\eta_x})_x) =  d^{\gamma \delta}\mathcal{O}(\frac{1}{d}) > 0  \quad \text{in}~(1-d_0,1)\times [0,T], $$
	for small enough $d_0$ depending on $ C_1 $ and $C_2 $ in \eqref{H1-1}.
	Thus, \subeqref{NS-3-Lag}{1} implies that $v$ satisfies the differential inequality :
	\begin{equation} \label{sign-3}
		\begin{aligned}
			(\frac{x^2 \rho_0}{\eta^2} \partial_t +\sL_3)v:= &\Big( \frac{x^2 \rho_0}{\eta^2} \partial_t - \frac{2{\mu}+\lambda }{\eta_x} \partial_{x}^2 - (\frac{2(2\mu+\lambda)}{\eta} -\frac{(2\mu+\lambda)\eta_{xx}}{\eta_x^2} \\
			&   + \frac{(2\mu+\lambda)_x}{\eta_x}) \partial_{x} - (-\frac{2(2\mu+\lambda)\eta_x}{\eta^2} +\frac{2\lambda_x}{\eta})
			\Big)v \\
			=& -(\frac{Rx^{2\gamma} \rho_0^{\gamma} \zeta}{\eta^{2\gamma} \eta_x^{\gamma} })_x> 0 \quad \text{in}~(r_0,1)\times (0,T].
		\end{aligned}
	\end{equation} 
    $\rho_0 \partial_t +\sL_3$ satisfies \eqref{L-coeff} for $0\leq \alpha(\gamma-1)\leq1$  in $ [1-d_0,1]\times[0,T] $ , provided that $ d_0 $ is small enough, by taking
    \begin{equation}\label{L3-coeff}
    	\begin{aligned}
    		& a_0=c_v\frac{x^2 \rho_0}{\eta^2} ,~ a=\frac{2{\mu}+\lambda }{\eta_x},
    		~ c= (-\frac{2(2\mu+\lambda)\eta_x}{\eta^2} +\frac{2\lambda_x}{\eta}),\\
    		&\frac{a_0}{a}=\mathcal{O}(\rho_0^{1-\alpha(\gamma-1)}) <C, \\
    		& \frac{b}{a}=\frac{\eta_x}{2\mu+\lambda } (\frac{2(2\mu+\lambda)}{\eta} -\frac{(2\mu+\lambda)\eta_{xx}}{\eta_x^2} + \frac{(2\mu+\lambda)_x}{\eta_x}),\\
    		&\quad < \frac{(\rho_0^{\alpha (\gamma-1) } )_x}{\rho_0^{\alpha (\gamma-1) } } +C <-\mathcal{O}(\frac{1}{d})+C \quad \text{in} ~ (1-d_0,1)\times [0,T],\\
    		& \frac{c}{a} \sim \mathcal{O}(\frac{(\rho_0^{\alpha (\gamma-1) } )_x}{\rho_0^{\alpha (\gamma-1) } }) \text{~and~thus~} -\frac{Ca}{d}<c\leq 0, ~ \text{in} ~ (1-d_0,1)\times [0,T],
    	\end{aligned}
    \end{equation}
    thanks to \eqref{H1-1} and \eqref{S-assump-3-Lag}. In the case that $b$ is not continuous on $[1-d_0,1]\times [0,T]$, we can make the adjustment as \eqref{L1-coff-re}.

	Thanks to $v\in C^{2,1}([0,1) \times [0,T]) \cap C( [0,1] \times[0,T])$, \eqref{H2*}, \eqref{sign-3}, and $v(1,t)=0$ from Proposition \ref{prop-v-3}, it follows from the strong maximum principle Proposition \ref{prop-strong} (\ref{w-min-prcp-1} in Lemma \ref{lem-weak} is enough) that  $v>0$ in $(r_0,1)\times[0,T]$.
	However, it follows from Hopf's lemma Proposition \ref{prop-Hopf} that $v_x(1,t)<0$ for $t\in[0,T]$, which contradicts Proposition \ref{prop-v-3}. 
	Consequently, Theorem \ref{thm-3-Lag} is proved.
\end{proof}

\section{Proof of Theorem \ref{thm-3'}}\label{sect-proof-thm-3'}

\subsection{Lagrangian formulation} In the case of $ n = 3 $ with spherical symmetry and $k\neq0$, following transform of the flow map of Section \ref{sect-proof-thm-3}, the reduced vacuum free boundary problem \eqref{NS-3} can be written in the Lagrangian coordinates as 
\begin{equation}\label{NS-3-Lag'}
	\begin{cases}
		\frac{x^2\rho_0}{\eta^2} v_t+ (\frac{Rx^{2} \rho_0 \Theta}{\eta^{2} \eta_x })_x =(2\mu+\lambda)(\frac{v_x}{\eta_x}+2\frac{v}{\eta})_x + (2\mu_x\frac{ v_x}{\eta_x} + \lambda_x (\frac{v_x}{\eta_x}+2\frac{v}{\eta})  )   &\text{in}\ [0,1)\times(0,T],\\
		c_v \frac{x^{2} \rho_0}{\eta^{2}  } \Theta_t + \frac{Rx^{2} \rho_0 \Theta}{\eta^{2} \eta_x } \frac{(\eta^2v)_x}{\eta^2 } = \frac{4}{3} \mu \eta_x (\frac{v_x}{\eta_x} -\frac{v}{\eta})^2 &\\
		\qquad\qquad\qquad+ (\frac{2}{3}\mu +\lambda) \eta_x (\frac{v_x}{\eta_x} +2\frac{v}{\eta} )^2 +\kappa ( (\frac{\Theta_x}{\eta_x})_x +2\frac{\Theta_x}{\eta}   )  &\text{in}\ [0,1)\times(0,T],\\
		\Theta>0 &\text{in}\ [0,1)\times(0,T],\\
		\Theta=0,~2 \mu \frac{v_x}{\eta_x} + \lambda (\frac{v_x}{\eta_x} +2\frac{v}{\eta})=0 & \text{at}\ x=1,\\
		(v,\Theta)=(u_0,\theta_0)  & \text{on}\ [0,1)\times\{t=0 \}.
	\end{cases}
\end{equation}
Note that $\theta_0>0$ in $\Omega_0$.
To prove Theorem \ref{thm-3'}, it suffices to prove the following theorem in Lagrangian coordinates:
\begin{theorem}\label{thm-3-Lag'}
	When $\bar{\kappa}>0$ and assume $0\leq \alpha\leq\frac{1}{\gamma-1},$ the solution $(v,\Theta)$ with 
	\begin{equation}
		\label{S-assump-3-Lag'}
		\left\{
		\begin{aligned} 
			&v\in C^2_1([0,1) \times [0,T]) \cap L^{\infty}(0,T; W^{2,\infty} (0,1) ) ,\\
			&v_t \in L^{\infty} (0,T; L^\infty(0,1)),\\
			&\Theta\in C^2_1([0,1) \times [0,T])\cap C([0,1]\times [0,T]),
		\end{aligned}
		\right.
	\end{equation}
	to the free boundary problem in Lagrangian coordinates for any positive time $T$, has to satisfy that 
	\begin{align} 
		& \Theta_x(1,t)<0 \quad t\in[0,T]. \label{physical-vacuum-theta-Lag}
	\end{align}
	Moreover, if initially the first line of \eqref{H1-1} is satisfied and $\theta\in L^{\infty}(0,T; H^1_0 (0,1) )$, then the entropy is bounded in space-time if and only if $\delta=1/(\gamma-1)$.
\end{theorem}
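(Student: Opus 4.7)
The plan is to attack the theorem in two stages. First, I deduce \eqref{physical-vacuum-theta-Lag} as a consequence of Hopf's lemma applied to the temperature equation, viewed as a degenerate linear parabolic equation for $\Theta$ with coefficients frozen along the (assumed-given) solution. Second, I combine this boundary non-degeneracy with the density profile \eqref{H1-1} to analyse the size of the entropy formula \eqref{entropy-0} near the free boundary, which pins down the critical value $\delta=1/(\gamma-1)$.

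For Step~1, I rewrite \subeqref{NS-3-Lag'}{2} by moving the pressure-work contribution $R x^{2}\rho_0(\eta^{2}v)_x/(\eta^{4}\eta_x)\cdot\Theta$ to the left as a zeroth-order coefficient, so that the equation takes the form
\begin{equation*}
\bigl(c_v \tfrac{x^{2}\rho_0}{\eta^{2}}\partial_t + \tilde{\sL}_3\bigr)\Theta = \tfrac{4}{3}\mu\eta_x\bigl(\tfrac{v_x}{\eta_x}-\tfrac{v}{\eta}\bigr)^2 + (\tfrac{2}{3}\mu+\lambda)\eta_x\bigl(\tfrac{v_x}{\eta_x}+2\tfrac{v}{\eta}\bigr)^2 \geq 0 \quad \text{in } [0,1)\times(0,T],
\end{equation*}
where $\tilde{\sL}_3$ has principal part $-\bar{\kappa}\Theta^{\alpha}/\eta_x\cdot\partial_x^2$ and a first-order part built from $2\Theta_x/\eta$ and $(1/\eta_x)_x$. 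I then verify on a boundary strip $(1-d_0,1)\times[0,T]$ with $d_0$ small that this operator fits the framework \eqref{L-coeff} of Section~\ref{sect-max}; the crucial ratio is $a_0/a\sim \rho_0/\Theta^{\alpha}$, and the exponent constraint $\alpha(\gamma-1)\leq 1$ is precisely what lets $a_0/a$ stay bounded once we know $\Theta^{\alpha}\gtrsim\rho_0$ near the boundary. To secure the latter comparison I first invoke the strong maximum principle (Proposition~\ref{prop-strong}, Corollary~\ref{cor-strong}) with $\theta_0>0$ inside $\Omega_0$ to conclude $\Theta>0$ on $[0,1)\times(0,T]$, and then compare $\Theta$ with a sub-solution built from the initial profile (when $\alpha=0$ this step is trivial). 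Hopf's lemma (Proposition~\ref{prop-Hopf}\ref{hopf-2}) then delivers $\Theta_x(1,t)<0$ for every $t\in(0,T]$, giving \eqref{physical-vacuum-theta-Lag}.

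For Step~2, I combine the one-sided bound from Hopf with the density asymptotics. Near $x=1$, \eqref{H1-1} and a uniform bound on $\eta_x$ give $\rho\sim d^{\delta}$, so $\rho^{\gamma-1}\sim d^{\delta(\gamma-1)}$; and from Step~1, $\Theta(x,t)\geq c(t)(1-x)$ near $x=1$ with $c(t)>0$. The entropy \eqref{entropy-0} in Lagrangian coordinates thus satisfies
\begin{equation*}
S - \mathrm{const} = c_v\log\bigl(\Theta/\rho^{\gamma-1}\bigr) \gtrsim c_v\bigl(1-\delta(\gamma-1)\bigr)\log d + O(1).
\end{equation*}
If $\delta>1/(\gamma-1)$, the right-hand side diverges to $+\infty$ as $d\to 0^{+}$ and $S$ blows up immediately. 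For $\delta<1/(\gamma-1)$ I need the matching upper bound $\Theta\leq C(1-x)$ to force $S\to-\infty$; I construct this via a super-solution of the temperature equation of the form $\bar{\Theta}=M(1-x)+Nt$, using $\theta\in L^{\infty}(0,T;H^{1}_{0}(\Omega(t)))$ to control the initial data and the boundary condition. Finally, when $\delta=1/(\gamma-1)$ the two-sided bound $c(t)d\leq \Theta\leq C d$ traps $\Theta/\rho^{\gamma-1}$ between positive constants and $S$ is bounded in space-time, closing the ``if and only if''.

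The main obstacle I expect is Step~1's verification that $a_0/a$ is bounded when $\alpha>0$: a priori one has no information about the rate at which $\Theta$ vanishes on the free boundary, and so Section~\ref{sect-max} cannot be invoked directly for the quasilinear coefficient $\kappa=\bar{\kappa}\Theta^{\alpha}$. The range $0\leq\alpha\leq 1/(\gamma-1)$ is precisely what makes a bootstrap through sub-solutions (coupled with the strong maximum principle) feasible. A secondary, milder difficulty in Step~2 is producing the matching upper bound $\Theta\leq C d$, which does not follow from $L^{\infty}(0,T;H^{1}_{0})$ alone (Sobolev embedding only yields $\Theta\leq C\sqrt{d}$) and must be extracted via a super-solution built from the PDE together with the boundary condition at $x=1$.
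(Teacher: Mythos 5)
Your overall route for \eqref{physical-vacuum-theta-Lag} coincides with the paper's: recast \subeqref{NS-3-Lag'}{2} as the degenerate parabolic inequality $(c_v\frac{x^2\rho_0}{\eta^2}\partial_t+\tilde{\sL}_3)\Theta\geq0$, verify \eqref{L-coeff} (with the relaxed zeroth-order condition of \cref{cor-strong}), and invoke \cref{prop-strong} and \cref{prop-Hopf} to get $\Theta>0$ and then $\Theta_x(1,t)<0$; the resulting lower bound $\Theta\gtrsim d$ forces $S\to+\infty$ when $\delta>1/(\gamma-1)$, exactly as in Section \ref{sect-proof-thm-3'}. Two of your supporting steps, however, are only announced, and one of them would not close as described.

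First, the comparison $\Theta^\alpha\gtrsim\rho_0$ near $x=1$ (needed so that $a_0/a\sim\rho_0/\Theta^\alpha$ is bounded when $\alpha>0$) is claimed to follow from ``a sub-solution built from the initial profile,'' but no such sub-solution is exhibited, and the construction is delicate for the reason you yourself flag: the operator whose comparison principle you would use has diffusion coefficient $\bar\kappa\Theta^\alpha$, which is precisely the quantity whose lower bound you are trying to establish. (The paper's own verification $a_0/a=\mathcal{O}(\rho_0^{1-\alpha(\gamma-1)})$ in \eqref{L3'-coeff} tacitly presupposes $\Theta\gtrsim\rho_0^{\gamma-1}$, so you have correctly located a real subtlety, but you have not resolved it.) Second, and more seriously, the barrier $\bar\Theta=M(1-x)+Nt$ for the upper bound $\Theta\leq Cd$ does not work as stated: with the coefficients of \eqref{L3'-coeff} one has $L\bar\Theta=a_0N+bM-c\bar\Theta$, and near $x=1$ both $a_0N$ and $c\bar\Theta$ degenerate like $\rho_0$ (since $|c|\leq Ca_0$), so the required inequality $L\bar\Theta\geq L\Theta$ must be carried entirely by $bM$ with $b=\kappa(2/\eta-\eta_{xx}/\eta_x^2)$; this factor is not sign-definite on $[0,T]$ for a general $T$, and when $\alpha>0$ it degenerates at the same rate $\Theta^\alpha$ as the dissipation it must dominate. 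The paper avoids any barrier here: it reads off $\Theta(x,t)=|\Theta_x(\xi,t)|\,d$ from the mean value theorem together with $\Theta(1,t)=0$, so that $e^{S(\gamma-1)/R}\sim|\nabla_{\mathbf n}\Theta|\,d^{1-\delta(\gamma-1)}$, and boundedness of $S$ from above and below then pins $\delta=1/(\gamma-1)$ directly. You should either adopt that shortcut or redesign the super-solution (e.g.\ $\bar\Theta=Me^{Kt}(1-x)^{\beta}$ with $\beta<1$, so that $-a\partial_x^2\bar\Theta$ contributes a positive singular term), noting that the latter only yields $\Theta\lesssim d^{\beta}$ and so still does not settle the ``if'' direction at $\delta=1/(\gamma-1)$.
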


\subsection{Proof of Theorem \ref{thm-3-Lag'}}\label{sect-con-3'}
\begin{proof}[Proof of Theorem \ref{thm-3-Lag'}]
	Suppose that there exists a classical solution $(v,\theta)$ to \eqref{NS-3-Lag} with \eqref{S-assump-3-Lag'}, \eqref{H1-1}, and \eqref{H2*} for some time $T$.
	
	 \subeqref{NS-3-Lag'}{2} implies that $\Theta$ satisfies the differential inequality
	\begin{equation} \label{sign-3'}
		\begin{aligned}
		 &	(c_v\frac{x^2 \rho_0}{\eta^2} \partial_t +\tilde{\sL}_3)\Theta:=\Big( c_v \frac{x^2 \rho_0}{\eta^2} \partial_t - \frac{\kappa }{\eta_x} \partial_{x}^2 - \kappa(\frac{2}{\eta} -\frac{\eta_{xx}}{\eta_x^2}) \partial_{x} 
			   + \frac{Rx^{2} \rho_0}{\eta^{2} \eta_x } \frac{(\eta^2v)_x}{\eta^2 }
			\Big)\Theta\\
		 &\qquad\qquad  = \frac{4}{3} \mu \eta_x (\frac{v_x}{\eta_x} -\frac{v}{\eta})^2 + (\frac{2}{3}\mu +\lambda) \eta_x (\frac{v_x}{\eta_x} +2\frac{v}{\eta} )^2 \geq 0 \quad \text{in}~(0,1)\times [0,T].
		\end{aligned}
	\end{equation} 
    $\rho_0 \partial_t +\tilde{\sL}_3$ satisfies \eqref{L-coeff}, except that the coefficient $c$ satisfies the condition in \cref{cor-strong},  for $0\leq \alpha(\gamma-1)\leq1$ in $ [0,1]\times[0,T] $ by taking
    \begin{equation}\label{L3'-coeff}
    	\begin{aligned}
    		& a_0=\frac{x^2 \rho_0}{\eta^2} ,~ a=\frac{\kappa }{\eta_x},~b=\kappa(\frac{2}{\eta} -\frac{\eta_{xx}}{\eta_x^2}) ,
    		~ c= -\frac{Rx^{2} \rho_0}{\eta^{2} \eta_x } \frac{(\eta^2v)_x}{\eta^2 },\\
    		&\frac{a_0}{a}=\mathcal{O}(\rho_0^{1-\alpha(\gamma-1)}) <C, ~ \frac{b}{a}=\eta_x (\frac{2}{\eta} -\frac{\eta_{xx}}{\eta_x^2})<C, ~ \text{in} ~ [0,1)\times [0,T],\\
    		& \frac{c}{a_0} \sim \mathcal{O}(\frac{(\eta^2v)_x}{\eta^2 }) \text{~and~thus~} |\frac{c}{a_0}|<C, ~ \text{in} ~ (0,1)\times[0,T],
    	\end{aligned}
    \end{equation}
    thanks to \eqref{H1-1} and \eqref{S-assump-3-Lag'}.
    
	Thanks to \eqref{sign-3'}, positivity of $\theta_0$ in $[0,1)\times [0,T]$ and $\Theta(1,t)=0$ from \subeqref{NS-3-Lag'}{4}, it follows from the strong maximum principle, Proposition \ref{prop-strong}, that  $\Theta>0$ in $(0,1)\times[0,T]$, and Hopf's lemma, Proposition \ref{prop-Hopf}, that $\Theta_x(1,t)<0$ for $t\in[0,T]$, which is exactly \eqref{physical-vacuum-theta}.
	
	If initially the first line of \eqref{H1-1} is satisfied and $\theta\in L^{\infty}(0,T; H^1_0 (\Omega(t)) )$,
	then \begin{align*}
		\frac{\bar A}{R}e^{S(\gamma-1)/R}&=\frac{\Theta}{\rho^{\gamma-1}}
		\sim\frac{\Theta}{d^{\delta(\gamma-1)}} \sim \abs{\nabla_{\mathbf{n}} \Theta}{} d^{1-\delta(\gamma-1)} ~\qquad\text{as}~d\rightarrow 0^+,
	\end{align*}
    where $\rho^{\gamma-1}= \rho_0^{\gamma-1} (x^2\eta^{-2}\eta_x^{-1})^{\gamma-1}\sim\rho_0^{\gamma-1} \sim d^{\delta(\gamma-1)}$ since $v\in C^2_1 ([0,1)\times[0,T])$.
    The entropy $S$ is bounded if and only if $0<\abs{\nabla_{\mathbf{n}} \Theta}{} d^{1-\delta(\gamma-1)}<\infty$ as $d\rightarrow 0^+$, i.e., $\delta=1/(\gamma-1)$.
	Consequently, Theorem \ref{thm-3-Lag'} is proved.
\end{proof}


\vline

\section*{Acknowledgements}
Xin Liu would like to thank the Isaac Newton Institute for Mathematical Sciences for support and hospitality during the programme TUR when part of this work was under taken. 
The research of Xin Liu was partially supported by a grant from the Simons Foundation during his visit to the Isaac Newton Institute, and by the Deutsche Forschungsgemeinschaft (DFG). 

The research of Yuan Yuan is supported by the National Nature Science Foundation of China (Nos. 11901208 and 11971009), the Natural Science Foundation of Guangdong Province (No. 2021A1515010247), and National Key R\&D Program of China (No. 2021YFA1002900).

\vline


\bibliographystyle{amsplain}
\providecommand{\bysame}{\leavevmode\hbox to3em{\hrulefill}\thinspace}
\providecommand{\MR}{\relax\ifhmode\unskip\space\fi MR }
\providecommand{\MRhref}[2]{%
  \href{http://www.ams.org/mathscinet-getitem?mr=#1}{#2}
}
\providecommand{\href}[2]{#2}

\end{document}